\newdimen\symskip
\newdimen\defskip
\newdimen\parind
\newdimen\leftmarge
\newdimen\theoremshape
\newcommand*{\клей}{\nobreak\hskip\z@skip}
\renewcommand{\"}{''}
\renewcommand{\:}{\textup{:}}
\renewcommand{\~}{\textup{;}}
\DeclareRobustCommand*{\т}{~\textemdash{} }
\DeclareRobustCommand*{\д}{\клей\hbox{-}\клей}
\newcommand{\no}{}
\renewcommand{\@listI}{\settowidth\labelwidth{\labheadi{\no}}\listipar{\parind}{\labelwidth}}
\newcommand{\listivpar}{\topsep\defskip\partopsep0pt\parsep-\parskip\itemsep0.5\topsep}
\newcommand{\listipar}[2]{\rightmargin0pt\leftmargin#1\labelsep#1\advance\labelsep-#2\itemindent0pt\listivpar}
\renewcommand{\@listii}{\settowidth\labelwidth{\labheadii{\@roman{\no}}}\listiipar{\parind}{\labelwidth}}
\newcommand{\listiivpar}{\topsep0.5\defskip\partopsep0pt\parsep-\parskip\itemsep0.5\topsep}
\newcommand{\listiipar}[2]{\rightmargin0pt\leftmargin#1\labelsep#1\advance\labelsep-#2\itemindent0pt\listiivpar}
\def\thempfn{\ifcase\value{footnote}1\or *\or **\or ***\else\@ctrerr\fi}
\renewcommand\footnoterule{%
  \kern-3\p@
  \hrule\@width1in
  \kern2.6\p@}
\renewcommand{\@biblabel}[1]{[#1]}
\renewenvironment{thebibliography}[1]
     {\renewcommand{\refname}{Литература}%
      \section*{\refname}%
      \@mkboth{\MakeUppercase\refname}{\MakeUppercase\refname}%
      \list{\@biblabel{\@arabic\c@enumiv}}%
           {\itemsep\baselineskip
            \leftmargin\parind
            \settowidth\labelwidth{\@biblabel{#1}}%
            \labelsep\parind\advance\labelsep-\labelwidth
            \@openbib@code
            \usecounter{enumiv}%
            \let\p@enumiv\@empty
            \renewcommand\theenumiv{\@arabic\c@enumiv}}%
      \sloppy
      \clubpenalty4000
      \@clubpenalty\clubpenalty
      \widowpenalty4000%
      \sfcode`\.\@m}
     {\def\@noitemerr
       {\@latex@warning{Empty `thebibliography' environment}}%
      \endlist}
\def\@maketitle{%
  \newpage
  \vskip1em%
  УДК \udk%
  \vskip1em%
  \begin{center}\bf%
  \let\footnote\thanks%
   {\Large\@author\par}%
   \vskip1.5em%
   {\LARGE\@title\par}%
   \vskip1em%
   {\large\@date}%
  \end{center}%
  \par
  \vskip1.5em}
\renewcommand\sectionmark[1]{%
 \markright{%
  \ifnum \c@secnumdepth >\z@
   \thesection. \ %
  \fi
 #1}}%
\renewcommand{\section}{\@startsection{section}{1}{0pt}%
{5.5ex plus .5ex minus .2ex}{1.5ex plus .3ex}%
{\center\normalfont\Large\bfseries\sffamily}}
\renewcommand{\subsection}{\@startsection{subsection}{2}{0pt}%
{4.5ex plus .4ex minus .2ex}{0.75ex plus .2ex}%
{\center\normalfont\large\bfseries\sffamily}}
\renewcommand{\subsubsection}{\@startsection{subsubsection}{3}{0pt}%
{2.5ex plus .5ex minus .2ex}{1ex plus .2ex}%
{\center\normalfont\bfseries\scshape}}
\def\@postskip@{\hskip.5em\relax}
\def\postsection{.\@postskip@}
\def\postsubsection{.\@postskip@}
\def\postsubsubsection{.\@postskip@}
\def\postparagraph{.\@postskip@}
\def\postsubparagraph{.\@postskip@}
\def\@seccntformat#1{\csname pre#1\endcsname\csname the#1\endcsname\csname post#1\endcsname}
\renewcommand{\thesection}{\textup{\arabic{section}}}
\newcommand{\parr}{\par\addvspace{\defskip}}
\newcommand{\theo}[2]{\newtheorem{#1}{#2}[section]}
\newcommand{\deff}[2]{\newenvironment{#1}{\parr\textbf{#2.}}{\parr}}
\def\@begintheorem#1#2[#3]{%
  \deferred@thm@head{\the\thm@headfont \thm@indent
    \@ifempty{#1}{\let\thmname\@gobble}{\let\thmname\@iden}%
    \@ifempty{#2}{\let\thmnumber\@gobble}{\let\thmnumber\@iden}%
    \@ifempty{#3}{\let\thmnote\@gobble}{\let\thmnote\@iden}%
    \thm@notefont{\bfseries\upshape}%
    \indent%
    \thm@swap\swappedhead\thmhead{#1}{#2}{#3}%
    \the\thm@headpunct
    \thmheadnl 
    \hskip\thm@headsep
  }%
  \ignorespaces}
\renewenvironment{proof}{\parr\pushQED{\qed}\normalfont$\square\quad$}{\popQED\@endpefalse\parr}
\newcommand{\labheadi}[1]{\textup{#1)}}
\newcommand{\labheadii}[1]{\textup{(#1)}}
\newenvironment{nums}[1]{\renewcommand{\no}{#1}\begin{enumerate}}{\end{enumerate}}
\newcommand{\eqn}[1]{\begin{equation}#1\end{equation}}
\newcommand{\equ}[1]{\begin{equation*}#1\end{equation*}}
\newcommand{\ml}[1]{\begin{multline*}#1\end{multline*}}
\newcommand{\case}[1]{\begin{cases}#1\end{cases}}
\newcommand{\rbmat}[1]{\begin{pmatrix}#1\end{pmatrix}}
\def\LT@makecaption#1#2#3{%
  \LT@mcol\LT@cols c{\hbox to\z@{\hss\parbox[t]\LTcapwidth{%
    \sbox\@tempboxa{#1{#2. }#3}%
    \ifdim\wd\@tempboxa>\hsize
      #1{#2. }#3%
    \else
      \hbox to\hsize{\hfil\box\@tempboxa\hfil}%
    \fi
    \endgraf\vskip\baselineskip}%
  \hss}}}
\newcounter{numt}
\newcounter{col}
\renewcommand{\thenumt}{\arabic{numt})}
\newcommand{\news}{\\\hline}
\newcommand{\refs}{\refstepcounter{numt}}
\newcommand{\n}[1]{\news\an{#1}}
\newcommand{\an}[1]{\refs\label{#1}\thenumt&}
\newcommand{\nc}{&\refstepcounter{col}}
\newcommand{\lont}[5]{%
\renewcommand{\tablename}{Table}
\setcounter{numt}{0}\setcounter{col}{1}
\begin{longtable}{|r|#1}\caption{#2}\label{#3}\\\hline №\nc#4\\\hline\endhead
\multicolumn{\value{col}}{c}{\textit{Продолж. на сл. стр.}}
\endfoot
\endlastfoot
#5\\\hline\end{longtable}}
\renewcommand{\ge}{\geqslant}
\renewcommand{\le}{\leqslant}
\newcommand{\fa}{\,\forall\,}
\newcommand{\exu}{\,\exists\,!\;}
\newcommand{\bes}{\infty}
\newcommand{\subs}{\subset}
\newcommand{\sups}{\supset}
\newcommand{\sm}{\setminus}
\newcommand{\cln}{\colon}
\newcommand{\nl}{\lhd}
\newcommand{\Lra}{\Leftrightarrow}
\newcommand{\hra}{\hookrightarrow}
\newcommand{\ol}{\overline}
\newcommand{\wt}{\widetilde}
\newcommand{\wh}{\widehat}
\newcommand{\suml}[2]{\sum\limits_{{#1}}^{{#2}}}
\newcommand{\sums}[1]{\sum\limits_{{#1}}}
\newcommand{\prodl}[2]{\prod\limits_{{#1}}^{{#2}}}
\newcommand*{\bw}[1]{#1\nobreak\discretionary{}{\hbox{$\mathsurround=0pt #1$}}{}}
\newcommand{\sco}{,\ldots,}
\newcommand{\sle}{\bw\le\ldots\bw\le}
\newcommand{\sge}{\bw\ge\ldots\bw\ge}
\newcommand{\ha}[1]{\left\langle#1\right\rangle}
\newcommand{\br}[1]{\bigl(#1\bigr)}
\newcommand{\Br}[1]{\Bigl(#1\Bigr)}
\newcommand{\bbr}[1]{\biggl(#1\biggr)}
\newcommand{\ter}[1]{\textup{(}#1\textup{)}}
\newcommand{\bm}[1]{\bigl|#1\bigr|}
\newcommand{\hn}[1]{\left\|#1\right\|}
\newcommand{\bs}[1]{\bigl[#1\bigr]}
\newcommand{\BS}[1]{\Bigl[#1\Bigr]}
\newcommand{\bc}[1]{\bigl\{#1\bigr\}}
\newcommand{\BC}[1]{\Bigl\{#1\Bigr\}}
\newcommand{\hb}[1]{\hskip-1pt\left/#1\right.\hskip-1pt}
\newcommand{\hbr}[1]{\left.\hskip-1pt#1\right/\hskip-1pt}
\newcommand{\mbb}{\mathbb}
\newcommand{\mbf}{\mathbf}
\newcommand{\mcl}{\mathcal}
\newcommand{\mfr}{\mathfrak}
\newcommand{\mrm}{\mathrm}
\newcommand{\R}{\mbb{R}}
\newcommand{\Z}{\mbb{Z}}
\newcommand{\N}{\mbb{N}}
\newcommand{\T}{\mbb{T}}
\newcommand{\F}{\mbb{F}}
\newcommand{\Cbb}{\mbb{C}}
\newcommand{\Hbb}{\mbb{H}}
\newcommand{\RP}{\mbb{R}\mrm{P}}
\newcommand{\CP}{\mbb{C}\mrm{P}}
\newcommand{\Zc}{\mcl{Z}}
\newcommand{\ggt}{\mfr{g}}
\newcommand{\sug}{\mfr{su}}
\newcommand{\pd}{\partial}
\newcommand{\al}{\alpha}
\newcommand{\be}{\beta}
\newcommand{\ga}{\gamma}
\newcommand{\de}{\delta}
\newcommand{\De}{\Delta}
\newcommand{\ep}{\varepsilon}
\newcommand{\la}{\lambda}
\newcommand{\nab}{\nabla}
\newcommand{\rh}{\rho}
\newcommand{\si}{\sigma}
\newcommand{\ta}{\theta}
\newcommand{\ph}{\varphi}
\newcommand{\om}{\omega}
\newcommand{\Om}{\Omega}
\DeclareMathOperator{\Lie}{Lie}
\DeclareMathOperator{\Ker}{Ker}
\DeclareMathOperator{\Ad}{Ad}
\DeclareMathOperator{\Mat}{Mat}
\DeclareMathOperator{\End}{End}
\DeclareMathOperator{\Arg}{Arg}
\DeclareMathOperator{\Int}{Int}
\DeclareMathOperator{\Rea}{Re}
\DeclareMathOperator{\Img}{Im}
\DeclareMathOperator{\rk}{rk}
\DeclareMathOperator{\tr}{tr}
\DeclareMathOperator{\diag}{diag}
\DeclareMathOperator{\id}{id}
\newcommand{\GL}{\mbf{GL}}
\newcommand{\SL}{\mbf{SL}}
\newcommand{\Or}{\mbf{O}}
\newcommand{\SO}{\mbf{SO}}
\newcommand{\SU}{\mbf{SU}}
\newcommand{\thra}{\twoheadrightarrow}
\newcommand{\lp}[1]{\llap{$#1$}}
\begin{document}

\author{О. Г. Стырт}
\title{О пространстве орбит\\
трёхмерной компактной линейной группы Ли}
\date{}
\newcommand{\udk}{512.815.1, 512.816.2}

\maketitle

{\leftskip\parind\rightskip\parind
Исследуется вопрос о том, является ли топологический фактор вещественного линейного представления простой трёхмерной компактной группы Ли многообразием. Получена верхняя оценка размерности представления, фактор которого является многообразием; разобрано большинство оставшихся случаев.

Библиография: 1 наименование.

\smallskip

\textbf{Ключевые слова\:} группа Ли, топологический фактор действия.\par}

\section{Введение}\label{introd}

Настоящая работа является непосредственным продолжением статьи~\cite{comm}. Прежде всего дадим три базовых определения, игравших ключевую роль
и~в~\cite{comm}.

\begin{df} Непрерывное отображение гладких многообразий назовём \textit{ку\-соч\-но-глад\-ким}, если оно переводит любое гладкое подмногообразие в~конечное объединение гладких подмногообразий.
\end{df}

В частности, всякое собственное гладкое отображение гладких многообразий является ку\-соч\-но-глад\-ким.

Рассмотрим дифференцируемое действие некоторой компактной группы Ли~$G$ на гладком многообразии~$M$.

\begin{df} Будем говорить, что фактор действия $G\cln M$ \textit{диффеоморфен} (\textit{ку\-соч\-но-диф\-фео\-мор\-фен}) гладкому многообразию~$M'$, если топологический фактор $M/G$ гомеоморфен~$M'$, причём отображение факторизации $M\to M'$ гладкое (ку\-соч\-но-гладкое).
\end{df}

\begin{df} Будем говорить, что фактор действия $G\cln M$ является \textit{гладким многообразием}, если он ку\-соч\-но-диф\-фео\-мор\-фен некоторому гладкому многообразию.
\end{df}

Перейдём непосредственно к~постановке задачи.

Рассмотрим линейное представление компактной группы Ли~$G$ в~вещественном пространстве~$V$. Нас по-прежнему (как и~в~\cite{comm}) интересует вопрос о~том, является ли фактор $V/G$ этого действия топологическим многообразием, а~также является ли он гладким многообразием. Следуя~\cite{comm}, будем далее для краткости называть топологическое многообразие просто <<многообразием>>.

Через~$G^0$ будем обозначать связную компоненту единицы группы~$G$, а~через~$\ggt$\т её касательную алгебру.

Случай, когда группа~$G^0$ коммутативна, был разобран в~\cite{comm}. Данная же работа посвящена исследованию поставленной проблемы в~предположении, что $\ggt\cong\sug_2$.

В пространстве~$V$ можно зафиксировать $G$\д инвариантное скалярное умножение. Тогда группа~$G$ действует ортогональными операторами\: $G\subs\Or(V)$.

Для любой одномерной подалгебры (что то же самое, одномерного подпространства) $\ggt'\subs\ggt$ подмножество $\ggt'V\subs V$ является, очевидно, подпространством.

Допустим, что $\ggt\cong\sug_2$\т что равносильно, группа~$G^0$ изоморфна одной из групп $\SU_2$ и~$\SO_3$.

Обозначим через $n_1\sco n_L$ размерности неприводимых компонент представления $\ggt\cln V$ (с учётом кратностей). Если все числа~$n_i$ равны~$1$, то группа~$G^0$ действует на~$V$ тождественно, и~вопрос описания фактора $V/G$ сводится к~аналогичному вопросу для действия конечной группы $G/G^0$ в~пространстве~$V$. Поэтому будем считать, что $n_1\sge n_l>1=n_{l+1}=\dots=n_L$, $l=1\sco N$. Число $\bs{\frac{n_i}{2}}$ является натуральным при $n_i>1$ и~равно нулю при $n_i=1$. Положим $q(V):=\suml{i=1}{L}\bs{\frac{n_i}{2}}=\suml{i=1}{l}\bs{\frac{n_i}{2}}\in\N$.

В~\S\,\ref{promain} будут доказаны теоремы~\ref{main}---\ref{main2}.

\begin{theorem}\label{main} Если $\ggt\cong\sug_2$, а~$V/G$\т гладкое многообразие, то $q(V)\le4$.
\end{theorem}

\begin{theorem}\label{main1} Если $\ggt\cong\sug_2$, а~$V/G$\т многообразие, то $q(V)>2$.
\end{theorem}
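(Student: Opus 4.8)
The plan is to argue by contraposition: assuming $q(V)\le2$, I will show that the orbit space $V/G$ cannot be commutative, because it fails to be the smooth (boundaryless, in fact homogeneous) manifold that commutativity requires. First I would reduce to a finite list of representations. Since $V$ is real, $V_{\Cbb}$ is self-conjugate; the odd-dimensional irreducible representations of $\sug_2$ are of real type and the even-dimensional ones quaternionic, so every even $n_i$ occurs in $V_{\Cbb}$ with even multiplicity. As each nontrivial $n_i\ge2$ contributes $[n_i/2]\ge1$ to $q(V)=\sum_i[n_i/2]$, the bound $q(V)\le2$ leaves only the multisets of nontrivial dimensions $\{3\}$, $\{5\}$, $\{3,3\}$ and $\{2,2\}$. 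Thus, up to a trivial summand $\R^t$ on which $G^0$ acts trivially, the nontrivial part of $V$ is one of $\R^3$, $\R^5$, $\R^3\oplus\R^3$ (representations of $\SO_3$), or $\Cbb^2$ (the standard representation of $\SU_2$, the only case forcing $G^0\cong\SU_2$).

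Next I would describe each orbit space through its algebra of invariants. For $\R^3$ and for $\Cbb^2$ the invariants are generated by the squared norm, so $V/G^0\cong[0,\infty)$, a half-line with a distinguished endpoint. For $\R^5$, realized as traceless symmetric $3\times3$ matrices under conjugation, the action is polar with a two-dimensional section, and $V/G^0\cong\R^2/S_3$ is a closed $60^\circ$ Weyl sector. For $\R^3\oplus\R^3$ the invariants $|v_1|^2$, $|v_2|^2$, $\ha{v_1,v_2}$, subject to $|v_1|^2\,|v_2|^2\ge\ha{v_1,v_2}^2$, cut out a solid (positive semidefinite) cone. Passing from $G^0$ to $G$ only replaces $V/G^0$ by a further finite quotient and multiplies it by the flat factor $\R^t$.

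In every case the quotient therefore has nonempty boundary or a genuine corner/cone point, and neither the finite quotient nor the flat factor can remove it. Hence $V/G$ is not a smooth boundaryless manifold, still less a homogeneous one, which contradicts the hypothesis that $V/G$ is commutative. This forces $q(V)>2$.

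The step I expect to be the main obstacle is making the implication ``$V/G$ commutative $\Rightarrow$ $V/G$ is a smooth boundaryless (homogeneous) manifold'' fully rigorous from the definition, together with verifying that the enumeration above is genuinely exhaustive once the disconnected part $G/G^0$ and the trivial summands are allowed, so that no small representation is missed. The delicate case is $\R^5$, whose quotient is a two-dimensional Weyl cone rather than an evident half-line, and where the obstruction to smoothness must be identified with care.
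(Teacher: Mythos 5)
You have misread the hypothesis: in this paper the condition on $V/G$ in Theorem~\ref{main1} is that it be a topological manifold, not that it be commutative in the Gelfand sense (compare Lemma~\ref{stratk} and Theorems~\ref{n7}---\ref{n53}, whose conclusions are $V/G\cong\R^4$, $V/G\cong\R^5$, or that $V/G$ is not a smooth manifold). Your contrapositive skeleton --- assume $q(V)\le2$ and show $V/G$ is not a boundaryless manifold --- is therefore aimed at exactly the right target, and the bridge you yourself flag as the main obstacle (``commutative $\Rightarrow$ smooth boundaryless homogeneous manifold'') is both unnecessary and false as stated: the quotient of the Gelfand pair $\SO_3\cln\R^3$ is the half-line $[0;+\bes)$. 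Translated into the paper's notation your enumeration is correct (real irreducibles of $\sug_2$ have dimension odd or divisible by $4$, so $q(V)\le2$ forces the nontrivial summands to be $\{3\}$, $\{4\}\cong\Hbb$ --- your $\Cbb^2$, counted as one real irreducible with $n_1=4$ --- $\{5\}$ or $\{3;3\}$), and your four quotient computations (half-line, half-line, the $S_3$-sector, the positive-semidefinite cone) are right. This is a genuinely different, more computational route than the paper's, which never computes any orbit space here.

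The genuine gap is your last step: ``neither the finite quotient nor the flat factor can remove it'' is asserted, not proved, and it is where all the remaining work lies. You must show that $V/G\cong\br{(V_0^{\perp}/G^0)\times V_0}\hb{(G/G^0)}$ --- a finite quotient of (space with boundary)$\,\times\,\R^t$, where $G/G^0$ may act nontrivially on $V_0$ and may permute the two copies of $\R^3$ in the $\{3;3\}$ case --- cannot be a boundaryless topological manifold. That requires an actual obstruction, e.g.\ rational local homology together with the transfer isomorphism for the finite open quotient map: at the image of a boundary point the top-degree local homology vanishes, whereas every point of an $n$-manifold contributes $\Q$. The paper sidesteps both the classification and this issue: for $q(V)=1$ (so $n_1=3$), and for the case where no vector has one-dimensional stabilizer subalgebra (which by~\eqref{qV} forces all $n_i$ divisible by $4$, hence $l=1$, $n_1=4$), it invokes Lemma~\ref{transfer}, since $G^0\subs G\bs{V_0^{\perp}}$ acts transitively on the unit sphere of $V_0^{\perp}$; in the remaining case $q(V)=2$ with some $\dim\ggt_v=1$ (covering your $\{5\}$ and $\{3;3\}$) it applies Lemma~\ref{slice} and Corollary~\ref{xiV}: the slice computation $\dim(\ggt_vN_v)=\dim(\ggt_vV)-\dim\br{\ggt_v(\ggt v)}=2q(V)-2=2$ contradicts $\dim(\ggt_vN_v)\ne2$. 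If you supply the local-homology (or an equivalent) argument for the finite-quotient step, your route yields a correct alternative proof; as written it is incomplete precisely at the point those lemmas were designed to handle.
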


\begin{imp}\label{q34} Если $\ggt\cong\sug_2$, а~$V/G$\т гладкое многообразие, то $q(V)\in\{3;4\}$.
\end{imp}

\begin{theorem}\label{main2} Если $\ggt\cong\sug_2$, $G=G^0$, $V/G$\т гладкое многообразие, а~среди чисел $\bs{\frac{n_i}{2}}$, $i=1\sco l$, хотя бы одно нечётно, то $q(V)=3$.
\end{theorem}

Согласно следствию~\ref{q34} и~теореме~\ref{main2}, если $G=G^0$, а~$V/G$\т гладкое многообразие, то представление $G\cln V_0^{\perp}$
относится к~одному из типов, приведённых в~таблице~\ref{tabl}. В~\S\,\ref{part} мы докажем теоремы \ref{quat}---\ref{n53}, описывающие б\'ольшую часть этих случаев.

\lont{>{$}r<{$}|>{$}r<{$}|}{}{tabl}{l\nc n_1\sco n_l}{%
\an{cass44}
2 & 4,\,4
\n{cass43}
2 & 4,\,3
\n{cass333}
3 & 3,\,3,\,3
\n{cass54}
2 & 5,\,4
\n{cass7}
1 & 7
\n{cass8}
1 & 8
\n{cass9}
1 & 9
\n{cass53}
2 & 5,\,3
\n{cass55}
2 & 5,\,5}

\begin{theorem}\label{quat} Предположим, что $\ggt\cong\sug_2$, а~для чисел $l$ и~$n_1\sco n_l$ имеет место одна из следующих комбинаций\:
\begin{nums}{9}
\item\label{case44} $l=2$, $n_1=n_2=4$\~
\item\label{case43} $l=2$, $n_1=4$, $n_2=3$\~
\item\label{case333} $l=3$, $n_1=n_2=n_3=3$.
\end{nums}
Тогда фактор $V/G^0$ диффеоморфен векторному пространству, причём группа $G/G^0$ действует на нём линейно.
\end{theorem}

\begin{note} Теорема~\ref{quat} позволяет в~каждом из случаев~\ref{case44}---\ref{case333} её формулировки свести исходное представление $G\cln V$ к~линейному представлению конечной группы $G/G^0$ в~векторном пространстве~$V/G^0$ (см. лемму~\ref{red}).
\end{note}

\begin{theorem}\label{n54} Если $\ggt\cong\sug_2$, $G=G^0$, $l=2$, $n_1=5$, $n_2=4$, то фактор $V/G$ не является гладким многообразием.
\end{theorem}

\begin{theorem}\label{n7} Если $\ggt\cong\sug_2$, $G=G^0$, $l=1$, $n_1=7$, то $V/G\cong\R^4$.
\end{theorem}

\begin{theorem}\label{n8} Если $\ggt\cong\sug_2$, $G=G^0$, $l=1$, $n_1=8$, то $V/G\cong\R^5$.
\end{theorem}

\begin{theorem}\label{n53} Если $G=G^0$, $l=2$, $n_1=5$, $n_2=3$, то $V/G\cong\R^5$.
\end{theorem}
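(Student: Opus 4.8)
The plan is to make the representation explicit and then analyse the orbit space through its algebra of invariants and its stratification.

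First I would fix the data. Since $n_1 = 5$ and $n_2 = 3$ are odd, both summands carry integer spin, so $-\idb \in \SU_2$ acts trivially and the effective group is $\SO_3$; as $V/G$ depends only on the image of $G$ in $\Or(V)$, I may take $G = \SO_3$. Concretely $V = W_5 \oplus W_3$, where $W_5$ is the space of traceless symmetric $3 \times 3$ matrices $A$ under conjugation and $W_3 = \R^3$ is the standard representation, with $g \cdot (A, v) = (gAg^{-1}, gv)$. Here $\dim V = 8$, and a generic pair $(A, v)$ has trivial stabiliser, so $\dim V/G = 5$, consistent with the claim. Since the action is linear, $V/G$ is a Euclidean cone over the link $\Sigma = S(V)/\SO_3$ with $\dim \Sigma = 4$, so it would suffice to identify $\Sigma$ with $S^4$ in the appropriate sense.

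Next I would compute $\R[V]^{\SO_3}$. The tensorial invariants are $p_1 = \tr A^2$, $p_2 = \tr A^3$, $p_3 = |v|^2$, $p_4 = v^{\top} A v$, $p_5 = v^{\top} A^2 v$; they are algebraically independent (Jacobian at a generic point) and, by the first fundamental theorem for $\Or_3$ together with $\tr A = 0$, generate the ring of $\Or_3$-invariants. For $\SO_3$ there is in addition the pseudo-invariant $p_6 = \det(v \mid Av \mid A^2 v)$, which changes sign under $\Or_3 \setminus \SO_3$; diagonalising $A = \diag(\lambda_1, \lambda_2, \lambda_3)$ gives $p_6 = v_1 v_2 v_3 \prod_{i<j}(\lambda_j - \lambda_i)$, whence $p_6^2 = F(p_1, \dots, p_5)$ for an explicit $F$. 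So $\R[V]^{\SO_3} = \R[p_1, \dots, p_6]/(p_6^2 - F)$ is not polynomial, and the categorical quotient is the hypersurface $\{p_6^2 = F\}$, not $\R^5$. This already shows that the isomorphism $V/G \cong \R^5$ cannot be the naive algebraic one and must be read in the sense of the orbit space of~\cite{comm}.

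To understand that orbit space I would use the slice $v = (r, 0, 0)$, $r = |v| \ge 0$, whose stabiliser is $\SO_2 \subset \SO_3$; the residual problem is the $\SO_2$-quotient of $A$ together with the scalar $r$. Writing $A$ in $\SO_2$-isotypic coordinates — the invariant entry $a = A_{11}$, a spin-$1$ coordinate $z_1 = A_{12} + iA_{13}$, and a spin-$2$ coordinate $z_2$ from the lower block — the orbit space is built from the weighted quotient $\Cbb^2/\SO_2$ of weights $(1,2)$ together with $a$ and $r$. I would then show that the total space, reassembled across the singular strata $r = 0$ and the repeated-eigenvalue loci of $A$, is isomorphic to $\R^5$: concretely, that the invariant map is proper and surjective and that the gluing along the branch locus $\{F = 0\}$ produces exactly $\R^5$. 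The hard part I expect to be precisely this reassembly: because $\R[V]^{\SO_3}$ is not free, the map $(p_1, \dots, p_5) \colon V/G \to \R^5$ is generically two-to-one, so proving $V/G \cong \R^5$ means either exhibiting a genuinely separating system of five global coordinates or showing that the branched double cover $\{p_6^2 = F\}$ collapses, along $\{F = 0\}$, to $\R^5$. Keeping track of the jumps of the stabiliser at the singular strata and upgrading a mere homeomorphism to the smooth, cosymmetric identification required by the definition is where I anticipate the main obstacle.
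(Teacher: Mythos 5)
Your setup coincides with the paper's: $G=\SO_3$ acting on the traceless symmetric matrices plus the standard $\R^3$, and your invariant-theoretic picture of $V/G$ as a double of a five-dimensional semialgebraic region, branched along the vanishing locus of the odd invariant $p_6$ (with $p_6^2=F(p_1,\dots,p_5)$), is in fact the very structure the paper's proof ends with: there $V/G$ is identified with $\br{K\times\{\pm1\}}\hb{\bc{(x;1)\sim(x;-1)\cln x\in\pd K}}$ for an explicit region $K\subs\R^5$, the function $\al$ playing the role of your $\sqrt{F}$. But the proposal has a genuine gap, and you name it yourself: everything after ``I would then show'' is a statement of intent. You never establish that the base region is a topological half-space with the branch locus as its boundary (which is what makes the double $\R^5$), nor do you carry out the reassembly across the stratum $r=0$ and the repeated-eigenvalue loci of $A$; the claim that $p_1,\dots,p_6$ generate $\R[V]^{\SO_3}$ is asserted without proof, and in any case a presentation of the invariant ring does not by itself determine the homeomorphism type of the orbit space --- one must also control the inequalities cutting out the image of the invariant map. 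So what you have is a correct analysis of what must be proved, not a proof.

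For comparison, the paper avoids your slice at $v=(r;0;0)$ (whose residual $\SO_2$-quotient with weights $1,2$ and whose gluing at $r=0$ you leave untouched) and instead slices the five-dimensional summand: every orbit meets $D\times\R^3$ with $D$ the ordered-diagonal chamber, so $V/G$ becomes $D\times\R_{\ge0}\times S^2$ modulo fiberwise relations coming from the stabilizers $Z_A$ listed in \eqref{cent}; all residual groups are then finite or one-dimensional and explicit, $S^2/H\cong\wh{M}$ is computed directly, the twist between the two degenerate boundary rays $I_0,I_1$ of $D$ is removed by the isotopy $\wt{g}\simeq\id_{\wh{M}}$ of the proposition, and finally the explicit polynomial maps $\pi$ and $\wt{\pi}$ identify the resulting quotient of $\R_{\ge0}^2\times[0;1]^3\times\{\pm1\}$ by \eqref{equz} with the double of $K$ along $\pd K$, hence with $\R^5$. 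Those three concrete steps --- the fiberwise stabilizer bookkeeping over a fundamental domain, the untwisting isotopy, and an explicit separating map onto a region recognizably homeomorphic to a half-space --- are exactly what your plan still owes. One small point in your favor: you worry about upgrading to a smooth identification, but a homeomorphism is what the theorem asserts and what the paper constructs.
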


\section{Обозначения и~вспомогательные факты}

Здесь мы напомним некоторые принятые в~\cite{comm} определения и~обозначения. Кроме того, будут сформулированы несколько результатов (леммы~\ref{red}---\ref{abel}), полученных в~\cite{comm}, и~их простейших следствий.

\begin{lemma}\label{red} Пусть имеется линейное представление компактной группы Ли~$G$ в~векторном пространстве~$V$. Предположим, что для некоторой
нормальной подгруппы $H\nl G$ фактор $V/H$ диффеоморфен гладкому многообразию~$M$. Тогда факторы $M/G$ и~$V/G$ являются или не являются гладкими
многообразиями одновременно.
\end{lemma} 

\begin{lemma}\label{stratk} Допустим, что у~орбиты общего положения представления $G\cln V$ гомотопическая группа $\pi_k$ \ter{$k>0$} нетривиальна, а~в~$V/G$ любой страт, отличный от главного, имеет коразмерность более $k+2$. Тогда $V/G$ не есть гладкое многообразие.
\end{lemma}

Пусть $G_v$\т стабилизатор вектора $v\in V$, а~$\ggt_v:=\Lie G_v=\{\xi\in\ggt\cln\xi v=0\}$. Группа~$G_v$ переводит в~себя подпространства $T_v(Gv)=\ggt v$ и~$N_v:=(\ggt v)^{\perp}$. Через~$M_v$ будем обозначать ортогональное дополнение в~$N_v$ к~подпространству $N_v^{G_v}$ неподвижных векторов для действия $G_v\cln N_v$. Тогда $V=\ggt v\oplus N_v^{G_v}\oplus M_v$ и~$G_vM_v=M_v$.

\begin{lemma}\label{slice} Если $v\in V$\т произвольный вектор, а~$V/G$\т \ter{гладкое} многообразие, то и~$N_v/G_v$\т \ter{гладкое} многообразие.
\end{lemma}

\begin{lemma}\label{MvV'} Любое $G^0$\д инвариантное подпространство $V'\subs V$ содержит вектор~$v$, для которого $M_v\perp V'$.
\end{lemma} 

Для $g\in G$ положим $\om(g):=\rk(E-g)-\rk\br{E-\Ad(g)}$. Далее, пусть $\Om:=\bc{g\bw\in G\cln\om(g)\bw\in\{0;2\}}\bw\subs G$ и
\eqn{\label{V0}
V_0:=\bc{v\in V\cln G^0v=\{v\}}=\{v\in V\cln\ggt v=0\}\subs V.}

Для всякого вектора $v\in V$ с~конечным стабилизатором и~элемента $g\in G_v$ имеем $\dim\br{(E-g)N_v}=\om(g)$.

\begin{lemma}\label{refstab} Если стабилизатор~$G_v$ вектора $v\in V$ конечен, а~фактор $V/G$ является гладким многообразием, то $G_v=\ha{G_v\cap\Om}$.
\end{lemma}

Для подпространства $W\subs V$ обозначим через $G[W]\subs G$ подгруппу Ли всех элементов из~$G$, действующих тождественно на~$W^{\perp}$. Очевидно, что подпространство~$W$ инвариантно относительно $G[W]$.

\begin{lemma}\label{transfer} Пусть $W\subs V$\т ненулевое $G^0$\д инвариантное подпространство, причём на его единичной сфере группа $G[W]$ действует транзитивно. Тогда $V/G$\т не многообразие.
\end{lemma}

Пусть $P$\т множество векторов в~конечномерном пространстве над произвольным полем. Некоторые из них могут совпадать\~ кратности учитываются.
Количество ненулевых векторов множества~$P$ (с учётом кратностей) обозначим через $\hn{P}$. Напомним определения \textit{$m$\д устойчивых} ($m\in\N$) и~\textit{неразложимых} множеств векторов, данные в~\cite{comm} и~необходимые также в~данной работе.


Разложением множества векторов конечномерного линейного пространства на компоненты будем называть его представление в~виде объединения своих подмножеств, линейные оболочки которых линейно независимы. Если среди этих линейных оболочек по крайней мере две нетривиальны, то такое разложение назовём \textit{собственным}. Будем говорить, что множество \textit{неразложимо}, если оно не допускает ни одного собственного разложения на компоненты. Всякое множество векторов разлагается на неразложимые компоненты единственным образом (с точностью до распределения нулевого вектора), причём для любого его разложения на компоненты каждая компонента является объединением некоторых его неразложимых компонент (вновь с~точностью до нулевого вектора).

\begin{df} Конечное множество векторов конечномерного пространства, рассматриваемое с~учётом кратностей своих
элементов, назовём \textit{$q$\д устойчивым} ($q\bw\ge0$), если его линейная оболочка не меняется при удалении из него любых векторов в~количестве не более~$q$ (с~учётом кратностей).
\end{df}

Теперь будем считать группу~$G^0$ коммутативной. Любое её неприводимое представление либо одномерно, либо двумерно. Во втором случае оно обладает $G^0$\д инвариантной комплексной структурой, и~ему соответствует вес $\la\cln G^0\to\T$, который можно отождествить с~его дифференциалом\т линейной функцией $\la\cln\ggt\to\R$. Последнюю можно понимать как вектор из~$\ggt$, используя скалярное умножение на~$\ggt$, инвариантное относительно $\Ad(G)$. Оператор $G^0$\д инвариантной комплексной структуры данного представления определён с~точностью до смены знака\~ то же можно сказать и~про вес $\la\in\ggt$. Одномерному представлению~$G^0$ сопоставим нулевой вес $\la\in\ggt$. Через $P\subs\ggt$ обозначим множество весов~$\la$, соответствующее разложению~$V$ в~прямую сумму неприводимых представлений группы~$G^0$ (с учётом кратностей). Это множество не зависит от выбора разложения (с точностью до знаков весов). Подпространство~$V_0^{\perp}$ имеет $G^0$\д инвариантную структуру комплексного пространства размерности $\hn{P}$. Пусть $V_{\la}$ ($\la\in P$)\т изотипная компонента представления $G^0\cln V$, соответствующая неприводимым представлениям с~весом~$\la$, а~$V_Q$ ($Q\subs P$)\т прямая сумма всех компонент $V_{\la}$, $\la\in Q$. В~частности, такое определение~$V_0$ согласуется с~определением~\eqref{V0}, где не требуется коммутативность~$G^0$.

\begin{lemma} Если $V/G$\т многообразие, то $P$\т $1$\д устойчивое множество.
\end{lemma} 

\begin{imp}\label{1sta} Если $V/G$\т многообразие, то $\hn{P}\ne1$.
\end{imp}

Множество всех ненулевых весов в~$P$ можно разложить в~дизъюнктное объединение неразложимых компонент $Q\subs P\sm\{0\}$.

Для произвольного линейного представления $G\cln V$ и~целого неотрицательного числа~$d$ будем, если не оговорено противное, подразумевать под записью $(V\oplus\R^d)/G$ фактор линейного представления
\equ{
G\cln V\oplus\R^d,\;g\cln v+x\to gv+x,\;g\in G,\,v\in V,\,x\in\R^d.}

\begin{lemma}\label{irred} Допустим, что $P$\т $2$\д устойчивое множество, а~$V/G$\т гладкое многообразие. Тогда для любой неразложимой компоненты
$Q\subs P\sm\{0\}$ найдётся число $d\ge0$, такое что $(V_Q\oplus\R^d)/G$\т гладкое многообразие.
\end{lemma} 

Предположим, что $\dim G=1$ (как следствие, алгебра~$\ggt$ и~группа~$G^0$ одномерны и~коммутативны). В~этом случае $P$\т неразложимое множество, а~его $2$\д устойчивость эквивалентна тому, что $\hn{P}\notin\{1;2\}$.

\begin{lemma}\label{1dim} Если $P$\т $2$\д устойчивое множество, $\dim G=1$, $V_0=0\ne V$, а~фактор $(V\oplus\R^d)/G$ является гладким многообразием
для некоторого~$d$, то $\hn{P}=3$.
\end{lemma} 

\begin{lemma}\label{abel} Если $P$\т $2$\д устойчивое множество, $\dim G=1$, $\Ad(G)=\{E\}$ и~$V_0\bw\ne V$, то $V/G$ не есть гладкое многообразие.
\end{lemma}

\begin{imp}\label{1dim3} Допустим, что $\dim G=1$, а~$V/G$\т гладкое многообразие. Тогда
\begin{nums}{9}
\item число $\hn{P}$ не превосходит~$3$\~
\item если $\Ad(G)=\{E\}$, то $\hn{P}\le2$.
\end{nums}
\end{imp}

\begin{proof} Если $\hn{P}<3$, то оба утверждения верны.

Далее будем считать, что $\hn{P}\ge3$. В~таком случае $V_0\ne V$, а~$P$\т $2$\д устойчивое множество. Теперь первое утверждение вытекает из лемм~\ref{irred} и~\ref{1dim} (при $Q:=P\sm\{0\}$), а~второе\т из леммы~\ref{abel}.
\end{proof}

\begin{imp}\label{xiV} Предположим, что $\dim G=1$. Тогда
\begin{nums}{9}
\item если $V/G$\т многообразие, то $\dim(\ggt V)\ne2$\~
\item если $V/G$\т гладкое многообразие, то $\dim(\ggt V)\le6$\~
\item если $\Ad(G)=\{E\}$, а~$V/G$\т гладкое многообразие, то $\dim(\ggt V)\le4$.
\end{nums}
\end{imp}

\begin{proof} Имеем $\dim(\ggt V)=\dim V_0^{\perp}=2\dim_{\Cbb}V_0^{\perp}=2\hn{P}$. Осталось воспользоваться следствиями~\ref{1sta} и~\ref{1dim3}.
\end{proof}

\section{Представления трёхмерных групп}\label{su2}


Здесь будут перечислены основные свойства неприводимых вещественных представлений групп $\SU_2$ и~$\SO_3$.

Пусть $V(m)$ ($m\in\N$)\т комплексное представление группы $\SU_2$ на пространстве однородных многочленов из $\Cbb[x,y]$ степени $m-1$ сдвигом аргумента.

Размерность каждого неприводимого представления группы $\SU_2$ либо кратна~$4$, либо нечётна. Неприводимое представление размерности $4m$\т это овеществление комплексного представления $V(2m)$. Неприводимое представление размерности $2m+1$\т это вещественная форма $V_{\R}(2m+1)$ комплексного представления $V(2m+1)$, состоящая из всех многочленов $\sums{|k|\le m}c_kx^{m+k}y^{m-k}$, таких что $c_{-k}=(-1)^k\,\ol{c_k}$. Любое неприводимое представление нечётной размерности абсолютно неприводимо, а~тело его эндоморфизмов есть $\{c E\cln c\in\R\}$.

Всякое неприводимое представление группы $\SO_3$ имеет нечётную размерность и~получается из неприводимого представления $\SU_2$ той же размерности факторизацией $\SU_2\thra\SU_2\hb{\{\pm E\}}\cong\SO_3$ по его ядру неэффективности $\{\pm E\}\subs\SU_2$.

\begin{lemma}\label{odd} Рассмотрим $(2m+1)$\д мерное неприводимое представление группы, изоморфной $\SU_2$ либо~$\SO_3$.
\begin{nums}{9}
\item Если число~$m$ нечётно, то существует вектор, стабилизатор которого является одномерным тором.
\item Если $m>1$, то найдётся вектор с~нетривиальным конечным стабилизатором.
\end{nums}
\end{lemma}

\begin{proof} Можно отождествить данное представление с~$V_{\R}(2m+1)$. Положим $f:=x^my^m\in V_{\R}(2m+1)$.

\begin{nums}{9}
\item Подгруппа всех элементов из $\SU_2$, переводящих в~себя прямую $\R f$, совпадает с~подгруппой всех диагональных и~побочно-диагональных матриц.
Все диагональные матрицы из $\SU_2$ переводят~$f$ в~себя, а~побочно-диагональные\т в~многочлен $(-1)^mf$. Таким образом, $f$\т искомый многочлен в~первом утверждении.

\item По условию $m>1$. Всякий многочлен из $V_{\R}(2m+1)$ с~бесконечным стабилизатором в~группе $\SU_2$ лежит в~одной орбите с~некоторым многочленом
из $\R f$ и~потому имеет кратные корни. Следовательно, многочлен $x^{2m}+(-1)^my^{2m}\in V_{\R}(2m+1)$ является требуемым\: он не имеет кратных корней и~переходит в~себя под действием элемента $\diag\br{e^{\pi i/m};e^{-\pi i/m}}\in\SU_2\sm\{\pm E\}$.\qedhere
\end{nums}
\end{proof}

Зачастую оказывается удобным отождествлять группу $\SU_2$ с~группой $\bc{\la\bw\in\Hbb\cln|\la|\bw=1}$ по умножению. Её трёхмерное неприводимое представление можно понимать как тавтологическое представление $\SO_3\cln\R^3$ или как действие группы кватернионов с~модулем~$1$ на пространстве~$\Hbb_0$ чисто мнимых кватернионов сопряжениями. Четырёхмерное же неприводимое представление группы $\SU_2$ есть не что иное как действие группы кватернионов с~модулем~$1$ на пространстве кватернионов правыми сдвигами, а~тело его эндоморфизмов есть тело операторов левых сдвигов на всевозможные кватернионы. В~обоих случаях действие группы на единичной сфере пространства представления транзитивно. Наиболее наглядной интерпретацией пятимерного неприводимого представления группы $\SO_3$ является её действие на пространстве симметрических матриц из $\Mat_{3\times3}(\R)$ со следом~$0$ матричными сопряжениями.

Как известно, при стереографической проекции $\CP^1\to S^2$ проективизация тавтологического представления $\SU_2\cln\Cbb^2$ становится тавтологическим действием $\SO_3\cln S^2$. Разлагая на линейные множители все ненулевые многочлены комплексного пространства $V(m)$ ($m\in\N$), можно отождествить множество его прямых с~симметрической степенью $SP^{m-1}(\CP^1)$. При этом проективизация комплексного представления $\SU_2\cln V(m)$ совпадёт с~тавтологическим действием $\SO_3\cln SP^{m-1}(S^2)$, поскольку при действии элемента $g\in\SU_2$ на проективном пространстве $PV(m)$ вектор $\rbmat{a\\b}\in\Cbb^2\sm\{0\}$, соответствующий линейному множителю $\ol{a}x+\ol{b}y$, умножается слева на матрицу~$g$. Допустим, что число~$m$ нечётно. Тогда разложение на линейные множители любого ненулевого многочлена $f\in V_{\R}(m)$ вместе с~множителем $ax+by$ включает в~себя и~$(-\ol{b}x+\ol{a}y)$\т что равносильно, соответствующий неупорядоченный набор из $m-1$ точки сферы~$S^2$ сохраняется при замене каждой точки на диаметрально противоположную ей. Если многочлен $f\in V(m)\sm\{0\}$ удовлетворяет импликации $(ax+by)\mid f\Lra(-\ol{b}x+\ol{a}y)\mid f$ для любых $a,b\in\Cbb$, то все числа $\la\in\Cbb$, такие что $\la f\in V_{\R}(m)$, образуют прямую в~$\Cbb$. Следовательно, проективизация вещественного представления $\SO_3\cln V_{\R}(m)$\т это то же самое, что тавтологическое действие $\SO_3\cln SP^{\frac{m-1}{2}}(\RP^2)$.

\section{Доказательства основных результатов}\label{promain}

Этот параграф будет посвящён доказательству теорем~\ref{main}---\ref{main2}.

Далее (до конца работы) будем считать, что $\ggt\cong\sug_2$, то есть что группа~$G^0$ изоморфна $\SU_2$ либо $\SO_3$. В~обозначениях и~соглашениях \S\,\ref{introd}, $L-l=\dim V_0$, $V_0\ne V$, а~числа $n_1\sco n_l$ суть размерности неприводимых компонент представления $\ggt\cln V_0^{\perp}$
(с~учётом кратностей), причём каждое из них либо кратно~$4$, либо нечётно.

Любая собственная подалгебра $\ggt'\subs\ggt$ одномерна, а~подпространство $\ggt'V$ имеет размерность $2q(V)$. Если $\xi\in\ggt\sm\{0\}$, то
$\xi V_0=0$, $\xi V=\xi V_0^{\perp}\subs V_0^{\perp}$, $\dim\br{\xi V_0^{\perp}}\bw=\dim(\xi V)\bw=2q(V)$, и
\eqn{\label{qV}
\dim\Br{\Ker\br{\xi|_{V_0^{\perp}}}}=\dim V_0^{\perp}-2q(V)=\suml{i=1}{l}n_i-2\suml{i=1}{l}\BS{\frac{n_i}{2}}=2\suml{i=1}{l}\BC{\frac{n_i}{2}}.}

\subsection{Доказательство теоремы~\ref{main}}

Гомотопическая группа $\pi_3(G)$ нетривиальна, так как существует накрытие $\SU_2\bw\thra\SO_3$, а~группа $\SU_2$ гомеоморфна трёхмерной сфере.

Достаточно доказать теорему при дополнительном предположении, что группа~$G$ действует на~$V_0$ тождественно. В~самом деле, найдётся вектор
$v\in V_0$, для которого $M_v\subs V_0^{\perp}$ (см. лемму~\ref{MvV'}). Тогда $\ggt v=0$, $\Lie G_v=\ggt_v=\ggt$, $N_v=V$, $V^{G_v}=N_v^{G_v}=M_v^{\perp}\sups V_0$. Если $V/G$\т гладкое многообразие, то и~$V/G_v=N_v/G_v$\т гладкое многообразие. При переходе от представления $G\cln V$ к~представлению $G_v\cln V$
остаются прежними представление алгебры $\ggt=\ggt_v$ в~пространстве~$V$, подпространство~$V_0$ и~число $q(V)$. Наконец, группа~$G_v$ действует тождественно на~$V_0$.

В дальнейшем будем считать, что $V/G$\т гладкое многообразие, а~$G$ действует на~$V_0$ тождественно. Требуется доказать, что $q(V)\le4$.

Допустим, что существует вектор $v\in V$ с~одномерным стабилизатором в~$G$. Согласно лемме~\ref{slice}, $N_v/G_v$\т гладкое многообразие. В~силу следствия~\ref{xiV}, $\dim(\ggt_v N_v)\le6$. Кроме того, $\dim(\ggt v)=2$ и~$\ggt_v V=\ggt_v(\ggt v)\oplus\ggt_v N_v$, поэтому $\dim(\ggt_v V)\le8$, $q(V)\bw=\frac{1}{2}\dim(\ggt_v V)\bw\le4$.

Теперь предположим, что в~пространстве~$V$ нет ни одного вектора с~одномерным стабилизатором.

Для любого вектора $v\in V\sm V_0$ алгебра~$\ggt_v$ не является одномерной и~не совпадает с~$\ggt$, значит, $\ggt_v=0$, $\dim(\ggt v)=3$. Отсюда всякое $\ggt$\д инвариантное подпространство размерности не более~$2$ содержится в~$V_0$.

Допустим, что $q(V)>4$.

Покажем, что $\Om=\{E\}$, то есть что произвольный оператор $g\in\Om$ тождественный.

Если $\Ad(g)=E$, то оператор~$g$ коммутирует со всеми операторами из~$\ggt$, следовательно, подпространство $(E-g)V$ инвариантно относительно~$\ggt$. Оно имеет размерность $\rk(E-g)=\om(g)\le2$ и~потому содержится в~$V_0$. Отсюда $(E-g)V=(E-g)V_0=0$, $g=E$.

Если же $\Ad(g)\ne E$, то $\rk\br{E-\Ad(g)}=2$, и~$\rk(E-g)=\om(g)+2\le4$. Рассмотрим вектор $\xi\in\ggt$, для которого $\eta:=\br{E-\Ad(g)}\xi\ne0$. Тогда
\equ{
\eta V^g=\br{\eta+\Ad(g)\xi(E-g)}V^g=(E-g)(\xi V^g).}
Значит, каждое из подпространств~$V^g$ и~$(E-g)V$ под действием~$\eta$ переходит в
подпространство размерности не более $\rk(E-g)\le4$. Для оператора $g\in\Or(V)$ выполнено равенство $V=V^g\oplus(E-g)V$, откуда $\dim(\eta V)\le8$, $q(V)=\frac{1}{2}\dim(\eta V)\le4$. Получили противоречие.

Тем самым мы доказали, что $\Om=\{E\}$.

Согласно~\eqref{qV}, $\dim V_0^{\perp}\ge2q(V)>8$, $\dim V>\dim V_0+8$, 
$\dim(V/G)\bw\ge\dim V-\dim G\bw=\dim V-3\bw>\dim V_0+5$. Для любого вектора $v\in V\sm V_0$ алгебра~$\ggt_v$ тривиальна, то есть стабилизатор~$G_v$ конечен. В~силу леммы~\ref{refstab}, он порождён своим пересечением с~$\Om$ и~поэтому тривиален. Следовательно, все векторы из $V\sm V_0$ лежат в~главном страте, стабилизатор общего положения тривиален, орбита общего положения гомеоморфна~$G$, а~её гомотопическая группа~$\pi_3$ нетривиальна. Всякий страт в~$V$, отличный от главного, содержится в~$V_0$. Значит, все страты фактора $V/G$, кроме главного, имеют размерность не более $\dim(V_0/G)=\dim V_0$ и~коразмерность не менее $\dim(V/G)-\dim V_0>5$. Применяя лемму~\ref{stratk} для числа $k:=3$, приходим к~противоречию с~тем, что $V/G$\т гладкое многообразие.

Теорема~\ref{main} доказана.

\subsection{Доказательство теоремы~\ref{main1}}

Допустим, что $q(V)\le2$.

\begin{nums}{9}
\item\label{so3} Если $q(V)=1$, то $l=1$ и~$\bs{\frac{n_1}{2}}=1$, $n_1=3$.
\item\label{quatr} Если пространство~$V$ не содержит ни одного вектора с~одномерным стабилизатором в~$G$, то ограничение любого оператора
$\xi\in\ggt\sm\{0\}$ на подпространство~$V_0^{\perp}$ невырождено. В~силу~\eqref{qV}, числа $n_1\sco n_l$ кратны~$4$, а~также
$\suml{i=1}{l}n_i=\dim V_0^{\perp}=2q(V)\le4$, откуда $l=1$ и~$n_1=4$.
\end{nums}

В~каждом из случаев \ref{so3} и~\ref{quatr} представление $G^0\cln V_0^{\perp}$ неприводимо, его размерность равна $3$ либо~$4$, и~на его единичной сфере группа $G^0\subs G\bs{V_0^{\perp}}$ действует транзитивно. Пользуясь леммой~\ref{transfer}, приходим к~противоречию с~тем, что $V/G$\т многообразие.

Значит, $q(V)=2$, причём существует вектор $v\in V$ с~одномерным стабилизатором~$G_v$. Согласно лемме~\ref{slice}, $N_v/G_v$\т многообразие. В~силу следствия~\ref{xiV}, $\dim(\ggt_v N_v)\bw\ne2$. С~другой стороны, $\dim\br{\ggt_v(\ggt v)}=\dim(\ggt v)=2$ и~$\ggt_v V=\ggt_v(\ggt v)\oplus\ggt_v N_v$, поэтому $\dim(\ggt_v N_v)=2q(V)-2=2$. Полученное противоречие доказывает теорему~\ref{main1}.

Из теорем~\ref{main} и~\ref{main1} сразу вытекает следствие~\ref{q34}.

\subsection{Доказательство теоремы~\ref{main2}}

Предположим, что группа~$G$ изоморфна одной из групп $\SU_2$ и~$\SO_3$, фактор $V/G$ является гладким многообразием, а~число $m:=\bs{\frac{n_i}{2}}$ нечётно для некоторого $i\in\{1\sco l\}$. Тогда $n_i=2m+1$, поскольку $2m$\т чётное число, не кратное~$4$. Одна из неприводимых компонент представления $\ggt\cln V_0^{\perp}$ имеет размерность $2m+1$ и~по лемме~\ref{odd} содержит вектор~$v$ с~одномерным коммутативным стабилизатором. Согласно лемме~\ref{slice}, $N_v/G_v$\т гладкое многообразие. В~силу следствия~\ref{xiV}, $\dim(\ggt_v N_v)\le4$. Кроме того, $\dim(\ggt v)=2$ и~$\ggt_v V=\ggt_v(\ggt v)\oplus\ggt_v N_v$, откуда $\dim(\ggt_v V)\le6$, $q(V)=\frac{1}{2}\dim(\ggt_v V)\le3$. Осталось воспользоваться следствием~\ref{q34}.

Тем самым теорема~\ref{main2} доказана.

\section{Разбор частных случаев}\label{part}

Здесь будут доказаны теоремы~\ref{quat}---\ref{n53}.

\subsection{Доказательство теоремы~\ref{quat}}

Все автоморфизмы алгебры $\ggt\cong\sug_2$ внутренние, и, значит, подгруппа $\Zc(G^0)\bw=\bc{g\in G\cln\Ad(g)=E}$ пересекает все связные компоненты группы~$G$. Поэтому достаточно доказать, что фактор действия $G^0\cln V$ диффеоморфен векторному пространству, причём на нём действует линейно любой ортогональный оператор в~$V$, перестановочный с~указанным действием. Далее, можно считать, что $V_0=0$, поскольку $V_0$ и~$V_0^{\perp}$\т $G$\д инвариантные подпространства, а~действие $G^0\cln V_0$ тождественно. Тогда размерности неприводимых компонент представления $G^0\cln V$ (с учётом кратностей) суть $n_1\sco n_l$.

Для доказательства теоремы~\ref{quat} потребуется следующее вспомогательное утверждение.

\begin{stm}\label{prod} Для любых чисел $k\in\N$, $d_1\sco d_k\in\R$ и~$d\in\R_{\ge0}$ существует единственное вещественное число~$t$, удовлетворяющее условиям $\fa i\;t+d_i\ge0$ и~$\prodl{i=1}{k}(t\bw+d_i)\bw=d$.
\end{stm}

\begin{proof} Без ограничения общности можно считать, что $d_1\sle d_k$.

Ясно, что $\fa i\;t+d_i\ge0\Lra t\ge-d_1$. Ограничение полиномиальной функции $f\cln\R\to\R,\,t\to\prodl{i=1}{k}(t+d_i)$ на промежуток $[-d_1;+\bes)$ строго возрастает, обращается в~нуль в~точке $-d_1$, стремится к~$+\bes$ при $t\to+\bes$ и~потому принимает каждое неотрицательное значение ровно в~одной точке. В~частности, $\exu t\in[-d_1;+\bes)\cln f(t)=d$.
\end{proof}

Дальнейшая часть доказательства теоремы~\ref{quat} будем проходить отдельно для каждого из случаев~\ref{case44}---\ref{case333}, указанных в~её формулировке.

\begin{cas}\label{cas44} $l=2$, $n_1=n_2=4$.
\end{cas}

Для каждой прямоугольной матрицы $A=(a_{ij})$ над телом~$\Hbb$ определена прямоугольная матрица $A^*:=\br{\ol{A}}^T$. Если число столбцов матрицы~$A_1$ равно числу строк матрицы~$A_2$, то $(A_1A_2)^*=A_2^*A_1^*$.

Пространство~$V$ можно отождествить с~пространством $\Mat_{2\times1}(\Hbb)$, в~котором всякий вектор~$z$ имеет скалярный квадрат $z^*z\in\R_{\ge0}$, а~кватернион~$h$ с~модулем~$1$, рассматриваемый как элемент группы $\SU_2$, действует умножением матриц $2\times1$ справа на матрицу $1\times1$ с~единственным элементом $h^{-1}$, то есть правым умножением координат векторов-столбцов на $h^{-1}$.

Выясним, как устроен фактор $V/G^0$. Если два вектора-столбца $z=\rbmat{z_1\\z_2}$ и~$w\bw=\rbmat{w_1\\w_2}$, $z_i,w_i\in\Hbb$, лежат в~одной орбите действия $G^0\cln V$, то $|z_i|=|w_i|$ ($i=1,2$) и~$z_1\ol{z_2}=w_1\ol{w_2}$. Обратное утверждение тоже верно. В~самом деле, если модули координат векторов $z$ и~$w$ попарно равны и~среди этих модулей есть нулевой, то $w\in G^0z$, а~если модули их координат попарно равны и~все отличны от нуля, то $w\in G^0z\Lra z_1\ol{z_2}=w_1\ol{w_2}$.

Отображение $\pi\cln V\to\R^2\oplus\Hbb,\,\rbmat{z_1\\z_2}\to\br{|z_1|^2;|z_2|^2;z_1\ol{z_2}}$ постоянно на орбитах действия $G^0\cln V$ и~разделяет их, а~его образом является подмножество
\equ{
M:=\bc{(d_1;d_2;\la)\cln d_i\in\R_{\ge0},\,\la\in\Hbb,\,d_1d_2=|\la|^2}\subs\R^2\oplus\Hbb.}
Далее, отображение $\ph\cln M\to\R\oplus\Hbb,\,(d_1;d_2;\la)\to(d_1-d_2;\la)$ биективно\: прообраз вектора $(d;\la)\in\R\oplus\Hbb$ совпадает с~подмножеством $\bc{(t+d;t;\la)\cln t\in\R_{\ge0},\,t+d\in\R_{\ge0},\,t(t+d)\bw=|\la|^2}\subs M$ и, согласно утверждению~\ref{prod}, состоит ровно из одного элемента. Значит, гладкое отображение $\pi'\cln V\to\R\oplus\Hbb,\,\rbmat{z_1\\z_2}\to\br{|z_1|^2-|z_2|^2;z_1\ol{z_2}}$, равное $\ph\circ\pi$, сюръективно, постоянно на орбитах группы~$G^0$ и~разделяет их.

Первое утверждение теоремы доказано. Докажем теперь второе.

Подпространство $S\subs\Mat_{2\times2}(\Hbb)$ всех эрмитовых кватернионных матриц содержит прямую $\R E$\~ диагональные элементы любой матрицы из~$S$ вещественны. Линейное отображение $S\to\R\oplus\Hbb,\,D\to(d_{11}-d_{22};d_{12})$  сюръективно, имеет ядро $\R E$ и~потому индуцирует линейный изоморфизм $S\hb{(\R E)}\to\R\oplus\Hbb$, при помощи которого можно отождествить фактор $V/G^0$ с~пространством $S\hb{(\R E)}$. Отображение
факторизации $\pi'\cln V\to S\hb{(\R E)}$ примет вид $\rbmat{z_1\\z_2}\to\rbmat{|z_1|^2&z_1\ol{z_2}\\z_2\ol{z_1}&|z_2|^2}+\R E$, то есть
$z\to zz^*+\R E$ (ясно, что $zz^*\in S$).

Ассоциативная алгебра эндоморфизмов представления $G^0\cln V$ канонически отождествляется с~ассоциативной алгеброй $\Mat_{2\times2}(\Hbb)$, действующей на пространстве $V\bw=\Mat_{2\times1}(\Hbb)$ умножениями слева. Предположим, что оператор $B\in\Mat_{2\times2}(\Hbb)$ в~пространстве~$V$ ортогонален. Тогда $BB^*=E$. Если $z\in\Mat_{2\times1}(\Hbb)$\т произвольный вектор, то $Bz(Bz)^*=B\br{zz^*}B^*$. Поскольку линейный оператор
$S\to S,\,D\to BDB^*=BDB^{-1}$ переводит в~себя матрицу~$E$ и~прямую $\R E$, оператор~$B$ действует линейно на факторе $V/G^0\cong S\hb{(\R E)}$.

\begin{cas}\label{cas43} $l=2$, $n_1=4$, $n_2=3$.
\end{cas}

Доказательство во многом повторяет доказательство случая~\ref{cas44}.

Пространство~$V$ можно отождествить с~пространством $\Hbb\oplus\Hbb_0$ (см. \S\,\ref{su2}), в~котором всякий вектор $(z;z_0)$ имеет скалярный квадрат $|z|^2+|z_0|^2$, а~кватернион~$h$ с~модулем~$1$, рассматриваемый как элемент группы $\SU_2$, действует по формуле $(z;z_0)\bw\to(zh^{-1};hz_0h^{-1})$.

Выясним, как устроен фактор $V/G^0$. Если два вектора $(z;z_0)$ и~$(w;w_0)$ из $\Hbb\bw\oplus\Hbb_0$ лежат в~одной орбите действия $G^0\cln V$, то $|z|=|w|$, $|z_0|=|w_0|$ и~$zz_0\ol{z}=ww_0\ol{w}$. Обратное утверждение тоже верно. В~самом деле, если модули координат векторов $(z;z_0)$ и~$(w;w_0)$ по прямым слагаемым $\Hbb$ и~$\Hbb_0$ попарно равны и~среди этих модулей есть нулевой, то $(w;w_0)\in G^0(z;z_0)$, а~если модули их координат попарно равны и~все отличны от нуля, то $(w;w_0)\in G^0(z;z_0)\Lra zz_0\ol{z}=ww_0\ol{w}$.

Отображение $\pi\cln V\to\R^2\oplus\Hbb_0,\,(z;z_0)\to\br{|z|^2;|z_0|^2;zz_0\ol{z}}$ постоянно на орбитах действия $G^0\cln V$ и~разделяет их, а~его образом является подмножество
\equ{
M:=\bc{(d;d_0;\la)\cln d,d_0\in\R_{\ge0},\,\la\in\Hbb_0,\,d^2d_0=|\la|^2}\subs\R^2\oplus\Hbb_0.}
Далее, отображение $\ph\cln M\to\R\oplus\Hbb_0,\,(d;d_0;\la)\to(d-d_0;\la)$ биективно\: прообраз вектора $(d;\la)\in\R\oplus\Hbb_0$ совпадает с~подмножеством $\bc{(t+d;t;\la)\cln t\in\R_{\ge0},\,t+d\in\R_{\ge0},\,t(t\bw+d)^2\bw=|\la|^2}\subs M$ и, согласно утверждению~\ref{prod}, состоит ровно из одного элемента. Значит, гладкое отображение $\pi'\cln V\to\R\oplus\Hbb_0,\,(z;z_0)\to\br{|z|^2-|z_0|^2;zz_0\ol{z}}$, равное $\ph\circ\pi$, сюръективно, постоянно на орбитах группы~$G^0$ и~разделяет их.

Рассмотрим произвольный эндоморфизм~$B$ представления $G^0\cln V$. Он переводит в~себя изотипные компоненты $\Hbb$ и~$\Hbb_0$ этого представления, действуя на первой из них левым сдвигом на кватернион~$b$, а~на второй\т скалярным оператором $b_0E$, $b_0\in\R$. Предположим, что $B\in\Or(V)$. Тогда $|b|=|b_0|=1$. Оператор~$B$ действует на факторе $V/G^0\cong\R\oplus\Hbb_0$ линейным преобразованием $(d;\la)\to(d;b_0b\la\ol{b})$\: если
$(z;z_0)\bw\in\Hbb\oplus\Hbb_0$\т некоторый вектор, то $B(z;z_0)=(bz;b_0z_0)$, причём $|bz|^2-|b_0z_0|^2=|z|^2-|z_0|^2$
и~$(bz)(b_0z_0)\ol{(bz)}=b_0\cdot b\br{zz_0\ol{z}}\ol{b}$. 

\begin{cas}\label{cas333} $l=3$, $n_1=n_2=n_3=3$.
\end{cas}

Пространство~$V$ можно отождествить с~пространством $\Mat_{3\times3}(\R)$, в~котором скалярное умножение имеет вид $(A_1,A_2)=\tr(A_1A_2^T)$, а~группа $\SO_3$ действует по формуле $C\cln A\to AC^{-1}$. Ассоциативная алгебра эндоморфизмов представления $G^0\cln V$ канонически отождествляется с
ассоциативной алгеброй $\Mat_{3\times3}(\R)$, действующей на пространстве $V=\Mat_{3\times3}(\R)$ умножениями слева. Оператор
$B\in\Mat_{3\times3}(\R)$ в~пространстве~$V$ ортогонален тогда и~только тогда, когда $BB^T=E$, $B\in\Or_3$.

Далее для доказательства теоремы~\ref{quat} понадобится следующая более общая конструкция.

Пусть $W$\т конечномерное евклидово (соотв. эрмитово) пространство над полем~$\F$, равным~$\R$ (соотв.~$\Cbb$), а~$A^*\in\End(W)$\т оператор, сопряжённый произвольному оператору $A\in\End(W)$. В~пространстве $\End(W)$ над полем~$\F$ все самосопряжённые операторы образуют вещественное подпространство $S\subs\End(W)$. В~этом пространстве рассмотрим подмножество $S_+$ всех неотрицательно определённых самосопряжённых операторов, а~в~пространстве $\End(W)\oplus\F$\т замкнутое подмножество $M:=\bc{(D;\la)\in S_+\times\F\cln\det D=|\la|^2}$.

В группе Ли $\GL(W)$ над полем~$\F$ все линейные операторы, сохраняющие скалярные произведения, образуют компактную вещественную подгруппу Ли $\Or(W):=\bc{C\bw\in\End(W)\cln CC^*=E}\bw\subs\GL(W)$. Компактная подгруппа Ли $\SO(W):=\Or(W)\cap\SL(W)$ действует линейно в~пространстве $\End(W)$ по формуле
\eqn{\label{repr}
\SO(W)\cln\End(W),\,C\cln A\to AC^{-1}.}
Для всякого $B\in\End(W)$ оператор $\End(W)\to\End(W),\,A\to BA$ перестановочен с~действием~\eqref{repr}.


Сформулируем теорему, являющуюся обобщением теоремы~\ref{quat} в~случае~\ref{case333}.

\begin{theorem}\label{repso} Фактор действия~\eqref{repr} диффеоморфен векторному пространству, причём на нём действует линейно любой оператор \eqn{\label{end}
\End(W)\to\End(W),\,A\to BA,}
где $B\in\Or(W)$.
\end{theorem}

Следующее утверждение является известным.

\begin{stm}\label{factor} Отображение $\pi\cln\End(W)\to S\oplus\F,\,A\to(AA^*;\det A)$ постоянно на орбитах действия~\eqref{repr} и~разделяет их, а~его образом является подмножество $M\subs S\oplus\F$.
\end{stm}

Очевидно, что вещественное пространство~$S$ содержит прямую $\R E$.

\begin{lemma}\label{biject} Отображение $\ph\cln M\to\br{S\hb{(\R E)}}\oplus\F,\,(D;\la)\to(D+\R E;\la)$ биективно.
\end{lemma}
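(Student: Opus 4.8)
The map $\ph$ leaves the $\F$-component untouched, so proving it bijective reduces to the following: for every coset $\bar D\in S\hb{(\R E)}$ and every $\la\in\F$ the coset $\bar D$ must contain exactly one positive semidefinite self-adjoint operator of determinant $|\la|^2$. The plan is to deduce both surjectivity and injectivity from Statement~\ref{prod} by passing to eigenvalues via the spectral theorem.

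First I would record the linear algebra I need. A self-adjoint $D\in S$ is diagonalizable with real eigenvalues $d_1\sle d_k$, where $k:=\dim W$; adding $tE$ shifts every eigenvalue by $t$, so $D+tE$ is again self-adjoint, $\det(D+tE)=\prodl{i=1}{k}(d_i+t)$, and $D+tE\in S_+$ precisely when $d_i+t\ge0$ for all $i$. In particular, for $\F=\Cbb$ the determinant of a self-adjoint operator is the product of its real eigenvalues and hence real, while $|\la|^2\in\R_{\ge0}$; thus the constraint $\det D=|\la|^2$ is an equation of exactly the form treated in Statement~\ref{prod}.

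For surjectivity, given $(\bar D;\la)\in\br{S\hb{(\R E)}}\oplus\F$ I would pick any self-adjoint representative $D$ of $\bar D$, list its eigenvalues $d_1\sco d_k$, and set $d:=|\la|^2\ge0$. Statement~\ref{prod} then yields a unique $t\in\R$ with $d_i+t\ge0$ for all $i$ and $\prodl{i=1}{k}(d_i+t)=d$; the operator $D':=D+tE$ is positive semidefinite, satisfies $\det D'=d=|\la|^2$, and lies in the coset $\bar D$, so $(D';\la)\in M$ and $\ph(D';\la)=(\bar D;\la)$.

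For injectivity, suppose $(D_1;\la_1)$ and $(D_2;\la_2)$ in $M$ have $\ph(D_1;\la_1)=\ph(D_2;\la_2)$; then $\la_1=\la_2=:\la$ and $D_2=D_1+t_0E$ for some $t_0\in\R$. Taking the eigenvalues of $D_1$ as the data $d_1\sco d_k$ and $d:=|\la|^2$, both $t=0$ and $t=t_0$ satisfy the hypotheses of Statement~\ref{prod}, since $D_1,D_2\in S_+$ and $\det D_1=\det D_2=d$; its uniqueness clause forces $t_0=0$, hence $D_1=D_2$. I do not anticipate a real obstacle here, as all the analytic content is already packaged in Statement~\ref{prod}; the only point needing care is handling the real and complex cases uniformly through the spectral theorem and the reality of $\det$ on $S$.
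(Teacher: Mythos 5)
Your proof is correct and follows essentially the same route as the paper: the paper likewise describes the $\ph$-fiber of $(D+\R E;\la)$ as $\bc{(tE+D;\la)\cln t\in\R,\,tE+D\in S_+,\,\det(tE+D)=|\la|^2}$, diagonalizes $D$ in an orthonormal basis via the spectral theorem, and invokes the existence-and-uniqueness of $t$ from Statement~\ref{prod}. Your separate packaging of surjectivity (existence of $t$) and injectivity (uniqueness of $t$, applied to $D_2=D_1+t_0E$) is just an explicit unwinding of the paper's single fiber computation.
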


\begin{proof} Прообраз вектора $(D+\R E;\la)\in\br{S\hb{(\R E)}}\oplus\F$, где $D\in S$, $\la\in\F$, под действием~$\ph$ совпадает с~подмножеством
$\bc{(tE+D;\la)\cln t\in\R,\,tE\bw+D\bw\in S_+,\,\det(tE\bw+D)\bw=|\la|^2}\subs M$.

Требуется доказать, что для любого оператора $D\in S$ и~числа $d\in\R_{\ge0}$ существует единственное вещественное число~$t$, удовлетворяющее условиям $tE+D\in S_+$ и~$\det(tE\bw+D)\bw=d$.

Оператор~$D$ в~некотором ортонормированном базисе пространства~$W$ записывается матрицей $\diag(d_1\sco d_k)$, где $k:=\dim_{\F}W$, $d_i\in\R$. Если $t\in\R$, то $tE+D\in S_+\bw\Lra\fa i\;t\bw+d_i\bw\ge0$, а~также $\det(tE+D)=\prodl{i=1}{k}(t+d_i)$. Осталось применить утверждение~\ref{prod}.
\end{proof}



Перейдём непосредственно к~доказательству теоремы~\ref{repso}.

Прежде всего, фактор действия~\eqref{repr} диффеоморфен векторному пространству $\br{S\hb{(\R E)}}\bw\oplus\F$, причём факторизацию можно задать формулой $\pi'\cln\End(W)\bw\to\br{S\hb{(\R E)}}\bw\oplus\F,\,A\to(AA^*+\R E;\det A)$. Чтобы это доказать, достаточно воспользоваться утверждением~\ref{factor} и~леммой~\ref{biject}, после чего положить $\pi':=\ph\circ\pi$.

Пусть теперь $B\in\Or(W)$. Если $A\in\End(W)$, то $BA(BA)^*=B\br{AA^*}B^*$, $\det(BA)\bw=(\det B)(\det A)$. Линейный оператор
$S\to S,\,D\to BDB^*=BDB^{-1}$ переводит в~себя матрицу~$E$ и~прямую $\R E$. Следовательно, оператор~\eqref{end} действует линейно на факторе действия~\eqref{repr}, диффеоморфном $\br{S\hb{(\R E)}}\oplus\F$.

Таким образом, нами доказана теорема~\ref{repso} и~вместе с~ней теорема~\ref{quat}.

\subsection{Доказательство теоремы~\ref{n54}}

Предположим, что группа~$G$ изоморфна одной из групп $\SU_2$ и~$\SO_3$, фактор её действия $G\cln V$ является гладким многообразием,
а~представление $\ggt\cln V_0^{\perp}$ есть прямая сумма неприводимых представлений $\ggt\cln V_1$ и~$\ggt\cln V_2$
размерностей $5$ и~$4$ соответственно. Поскольку все неприводимые представления группы $\SO_3$ имеют нечётную размерность,
группа~$G$ на самом деле изоморфна $\SU_2$. Согласно лемме~\ref{odd}, найдётся вектор $v\in V_1$ с~нетривиальным конечным стабилизатором.
В~силу леммы~\ref{refstab}, пересечение $G_v\cap\Om$ содержит некоторый нетривиальный элемент $g\in G$. Кроме того,
$\ggt v\subs V_1$, $V_2\subs N_v$, откуда $\dim\br{(E-g)V_2}\le\dim\br{(E-g)N_v}=\om(g)\le2$. С~другой стороны,
если отождествить группу $\SU_2$ с~группой $\bc{\la\in\Hbb\cln|\la|=1}$, то $g\ne1$, $\Hbb(1-g^{-1})=\Hbb$, $(E-g)V_2=V_2$, что противоречит неравенству $\dim\br{(E-g)V_2}\le2$.

Теорема~\ref{n54} доказана.

\subsection{Доказательство теоремы~\ref{n7}}

Можно считать, что подпространство~$V_0$ тривиально, поскольку группа $G=G^0$ действует на нём тождественно.

Итак, $G\cln V$\т неприводимое представление размерности $n_1=7$. Оно совпадает с~представлением $\SO_3\cln V_{\R}(7)$, а~его проективизация непрерывно отождествляется с~тавтологическим действием $\SO_3\cln SP^3(\RP^2)$. Исходя из этого, опишем фактор проективного действия
$\SO_3\cln PV_{\R}(7)$.

Топологическое пространство $SP^3(\RP^2)$ удобно понимать как фактор декартова куба единичной сферы евклидова пространства~$\R^3$ по действию конечной группы~$H$, порождённой всевозможными перестановками векторов и~сменами их знаков. Действие $H\cln(S^2)^3$ перестановочно с~тавтологическим действием $\Or_3\cln(S^2)^3$. Кроме того, $\Or_3=\SO_3\times\{\pm E\}$, а~оператор $(-E)$ действует на $(S^2)^3$ так же, как и~элемент группы~$H$, меняющий знаки всех трёх векторов. Следовательно, фактор пространства $SP^3(\RP^2)$ по действию $\SO_3$ гомеоморфен фактору индуцированного действия группы~$H$ на факторе $\hbr{(S^2)^3}\Or_3$. Отображение факторизации $(S^2)^3\thra\hbr{(S^2)^3}\Or_3$ задаётся формулой
\eqn{\label{facor}
\rh\cln(S^2)^3\to\R^3,\,(v_1;v_2;v_3)\to\br{(v_2,v_3);(v_3,v_1);(v_1,v_2)},}
причём
\eqn{\label{ddef}
D:=\rh\br{(S^2)^3}=\bc{x\in\R^3\cln A(x)\ge0}\subs\R^3,\;A(x):=\rbmat{1&x_3&x_2\\x_3&1&x_1\\x_2&x_1&1}.}
При перестановке векторов $v_1,v_2,v_3$ их попарные скалярные произведения также переставляются, а~при смене знака одного из этих векторов два попарных скалярных произведения меняют знак, а~третье остаётся прежним. Значит, индуцированное действие группы~$H$ на $\hbr{(S^2)^3}\Or_3\cong D$ совпадает с~ограничением действия $H'\cln\R^3$ линейной группы $H'$, порождённой всевозможными перестановками координат и~сменами их знаков в~чётном количестве, на инвариантное подмножество $D\subs\R^3$. Другими словами, $H'$\т конечная группа отражений, соответствующая системе корней $\De:=\{\pm\ep_i\pm\ep_j\cln i\ne j\}$, где $\{\ep_1;\ep_2;\ep_3\}$\т стандартный базис в~$\R^3$.

Подмножество $C:=\bc{x\in\R^3\cln x_1\ge x_2\ge|x_3|}\subs\R^3$ 
является камерой Вейля, отвечающей системе простых корней $\{\ep_1-\ep_2;\ep_2-\ep_3;\ep_2+\ep_3\}\subs\De$, и~потому пересекает каждую орбиту действия $H'\cln\R^3$ ровно в~одной точке. Следовательно, фактор $D/H'$ гомеоморфен $M:=C\cap D\subs\R^3$, откуда
$\hbr{\br{SP^3(\RP^2)}}\SO_3\cong\hbr{\br{\hbr{(S^2)^3}\Or_3}}H\cong M$.

Отображения факторизации $\rh\cln(S^2)^3\thra\hbr{(S^2)^3}\Or_3\cong D$, $\ta\cln D\thra D/H'\cong M$,
$\ta'\cln(S^2)^3\thra(S^2)^3/H\cong SP^3(\RP^2)$ и~$\rh'\cln SP^3(\RP^2)\thra\hbr{\br{SP^3(\RP^2)}}\SO_3\cong M$ удовлетворяют равенству
$\rh'\circ\ta'\equiv\ta\circ\rh$, то есть диаграмма
\eqn{\label{diagr}
\xymatrix{
(S^2)^3       \ar@{->>}[r]^-{\rh}  \ar@{->>}[d]^-{\ta'} & D \ar@{->>}[d]^-{\ta}\\
\lp{SP^3}(\RP^2) \ar@{->>}[r]^-{\rh'}                   & M}}
коммутативна.

Из определений подмножеств $C$ и~$D$ вытекает замкнутость и~выпуклость каждого из них, а~значит, и~их пересечения~$M$. Поскольку камера~$C$ соответствует системе положительных корней $\{\ep_i\pm\ep_j\cln i~j\}\subs\De$, подмножество~$M$ задаётся системой неравенств
\begin{align}
&&x_i+x_j&\ge0,&&x_i-x_j\ge0&(i>j);&&\ &&\label{weyl}\\
&&1+x_i&\ge0,&&1-x_i\ge0;\label{min}\\
&&\de(x)&\ge0,&&\de(x):=\det A(x).\label{det}
\end{align}

Докажем, что точка~$x$ множества~$M$ принадлежит его границе тогда и~только тогда, когда в~ней по крайней мере одно из неравенств \eqref{weyl}, \eqref{min} и~\eqref{det} обращается в~равенство. Достаточно доказать утверждение <<тогда>>, так как в~левых частях этих неравенств стоят непрерывные функции. Если $x\in\Int M$, то в~точке~$x$ все линейные неравенства \eqref{weyl} и~\eqref{min} являются строгими, в~частности, $1>x_1>x_2>\pm x_3$, $1\bw>x_1\bw>x_2\bw>|x_3|\bw\ge0$, $x_1>x_2|x_3|=|x_2x_3|\ge x_2x_3$,
$\frac{\pd\de}{\pd x_1}(x)=\frac{\pd}{\pd x_1}\br{2x_1x_2x_3+1-(x_1^2+x_2^2+x_3^2)}\bw=2(x_2x_3-x_1)<0$, $\nab_x\de\ne0$, вследствие чего неравенство~\eqref{det} оказывается строгим в~точке~$x$.

Допустим, что в~точке $x\in M$ одно из неравенств~\eqref{min} обращается в~равенство. Тогда в~этой точке обращается в~равенство по крайней мере одно из неравенств~\eqref{weyl}. Действительно, если $x\in M$, $x_i=\pm1$ и~$\{i;j;k\}=\{1;2;3\}$, то
$0\le\de(x)=2x_ix_jx_k\bw+1\bw-(x_i^2+x_j^2+x_k^2)=\pm2x_jx_k-x_j^2-x_k^2=-(x_j\mp x_k)^2$, $(x_j\mp x_k)^2\le0$, $x_j\mp x_k=0$.

Таким образом, фактор $\hbr{\br{SP^3(\RP^2)}}\SO_3$ гомеоморфен выпуклому компакту $M\bw=C\cap D\subs\R^3$, задаваемому неравенствами \eqref{weyl}, \eqref{min} и~\eqref{det}, причём $\pd M$ есть подмножество всех векторов из~$M$, в~которых по крайней мере одно из неравенств \eqref{weyl} и~\eqref{det} обращается в~равенство\:
\eqn{\label{dM}
\pd M=\bc{x\in M\cln(x_1^2-x_2^2)(x_2^2-x_3^2)(x_3^2-x_1^2)\det A(x)=0}.}

Вернёмся к~исходному представлению $G\cln V$.

Для всякого подмножества $U\subs V$, замкнутого относительно умножения на числа из $\R^*$, будем обозначать через $PU\subs PV$ множество прямых пространства~$V$, содержащихся в~$U\cup\{0\}$.

Положим $U:=\{v\in V\cln-v\notin Gv\}\subs V$, $F:=V\sm U=\{v\in V\cln-v\in Gv\}\subs V$.

Очевидно, что $GU=\R^*U=U$ и~$GF=\R^*F=F$. Группа~$G$ при действии на единичной сфере $S\subs V$ переводит в~себя подмножества $S\cap U$ и~$S\cap F$, а~при проективном действии на $PV$\т подмножества $PU$ и~$PF$, причём $S=(S\cap U)\sqcup(S\cap F)$, $S/G\bw=(S\cap U)/G\sqcup(S\cap F)/G$,
$PV=PU\sqcup PF$, $PV/G=PU/G\sqcup PF/G$. Двулистное накрытие $S\thra PV,\,v\to\R v$ перестановочно с~действием группы~$G$.
Индуцированное отображение $S/G\thra PV/G$ при ограничении на $(S\cap F)/G$ даёт гомеоморфизм $(S\cap F)/G\to PF/G$, а~при ограничении
на $(S\cap U)/G$\т двулистное накрытие $(S\cap U)/G\thra PU/G$.

В группе $\SU_2$ каждый класс сопряжённости имеет непустое пересечение с~подгруппой $\T:=\{\diag(\la;\ol{\la})\cln\la\in\Cbb,\,|\la|=1\}\subs\SU_2$. Поэтому $F=GF'$, где $F':=\{v\in V\cln-v\bw\in\T v\}\bw\subs V$. Кроме того, $\R^*F'=F'$, $PF=G(PF')$.

Многочлен $f=\sums{|k|\le3}c_kx^{3+k}y^{3-k}\in V_{\R}(7)$ принадлежит подмножеству $F'$ тогда и~только тогда, когда найдётся число $\la\in\Cbb$ с~модулем~$1$, такое что $(\la^{2k}+1)c_k=0$ для всех~$k$, $|k|\bw\le3$. Последнее, в~свою очередь, эквивалентно тому, что все числа~$k$, удовлетворяющие условиям $|k|\le3$ и~$c_k\ne0$, отличны от нуля и~включают в~своё разложение на простые множители число~$2$ с~одной и~той же кратностью, то есть что многочлен~$f$ принадлежит объединению комплексных подпространств $W_1:=\ha{x^{3+k}y^{3-k}\cln|k|\in\{1;3\}}_{\Cbb}$ и~$W_2:=\ha{x^{3+k}y^{3-k}\cln|k|=2}_{\Cbb}$ пространства $V(7)$.

Итак, $F'=(W_1\cup W_2)\cap V_{\R}(7)=F'_1\cup F'_2$, где $F'_i:=W_i\cap V_{\R}(7)$. Далее, $PF'=PF'_1\cup PF'_2$,
\eqn{\label{PF}
PF=G(PF')=G(PF'_1)\cup G(PF'_2).}

Теперь опишем $PF$, $PF'$, $PF'_i$ и~$G(PF'_i)$ ($i=1,2$) как подмножества топологического пространства $SP^3(\RP^2)$ с~учётом гомеоморфизма $\SO_3$\д пространств $PV$ и~$SP^3(\RP^2)$. Очевидно, что равенство~\eqref{PF} переносится с~пространства $PV$ на $SP^3(\RP^2)$.

Многочлен $f\in V_{\R}(7)$ принадлежит $W_1=\ha{x^6;x^4y^2;x^2y^4;y^6}_{\Cbb}$ тогда и~только тогда, когда его разложение на линейные множители вместе с~любым множителем $ax+by$ включает в~себя и~$ax-by$\т что равносильно, соответствующая неупорядоченная тройка прямых в~$\R^3$ переходит в~себя при повороте на угол~$\pi$ вокруг вертикальной прямой. Следовательно, $PF'_1\subs SP^3(\RP^2)$\т подмножество всех неупорядоченных троек прямых, переходящих в~себя при повороте на угол~$\pi$ вокруг вертикальной прямой, а~$G(PF'_1)\subs SP^3(\RP^2)$\т подмножество всех неупорядоченных троек прямых, переходящих в~себя при повороте на угол~$\pi$ вокруг некоторой прямой.

Многочлен $f=\sums{|k|\le3}c_kx^{3+k}y^{3-k}\in V_{\R}(7)$ принадлежит $W_2=\ha{x^5y;xy^5}_{\Cbb}$ тогда и~только тогда, когда
\eqn{\label{lin}
f=c_2x^5y+c_{-2}xy^5=xy(c_2x^4+c_{-2}y^4)=xy(c_2x^4+\ol{c_2}y^4)
=xy(ax+by)(ax+iby)(ax-by)(ax-iby),}
причём $a^4=c_2$, $b^4=-\ol{c_2}$, $|a|=|b|$. Пользуясь разложением~\eqref{lin} многочлена~$f$ на линейные множители, получаем, что $PF'_2\subs SP^3(\RP^2)$\т подмножество всех неупорядоченных троек прямых, одна из которых вертикальна, а~две другие горизонтальны и~ортогональны друг другу, то есть всех неупорядоченных троек попарно ортогональных прямых, одна из которых вертикальна. В~свою очередь, $G(PF'_2)\subs SP^3(\RP^2)$\т подмножество всех неупорядоченных троек попарно ортогональных прямых.

Согласно~\eqref{PF}, подмножество $PF\subs SP^3(\RP^2)$ состоит в~точности из всех неупорядоченных троек прямых, переходящих в~себя при повороте на угол~$\pi$ вокруг некоторой прямой либо включающих в~себя три попарно ортогональные прямые.

Докажем, что тройка прямых в~$\R^3$ принадлежит $PF$ тогда и~только тогда, когда либо все три прямые лежат в~одной плоскости, либо из трёх попарных углов между ними найдутся два равных.

Если все три прямые лежат в~одной плоскости, то при повороте на угол~$\pi$ вокруг прямой, ортогональной этой плоскости, каждая из них переходит в~себя. Если два из трёх попарных углов между ними равны, то можно выбрать на каждой прямой по вектору единичной длины, так чтобы выбранные векторы $v_1,v_2,v_3$ удовлетворяли равенству $(v_1,v_3)=(v_2,v_3)$. Тогда подпространство всех векторов, ортогональных векторам $v_1+v_2$ и~$v_3$, нетривиально и~содержит вектор $v_1-v_2$, а~значит, и~некоторую прямую, проходящую через $v_1-v_2$. Пусть $R$\т поворот на угол~$\pi$ вокруг этой прямой. Ясно, что $R(v_1-v_2)=v_1-v_2$, $R(v_1+v_2)=-(v_1+v_2)$, $Rv_3=-v_3$, $Rv_1=-v_2$, $Rv_2=-v_1$, $R(\R v_1)=\R v_2$, $R(\R v_2)=\R v_1$,
$R(\R v_3)=\R v_3$.

Обратно, предположим, что все три попарных угла между прямыми $\R v_1,\R v_2,\R v_3$, где $v_i\in\R^3\sm\{0\}$, различны. Тогда одна из прямых
$\R v_i$ неортогональна двум другим. Можно считать, что
\eqn{\label{noort}
(v_1,v_3)\ne0;\quad(v_2,v_3)\ne0.}
В частности, указанные три прямые не могут быть попарно ортогональными. Допустим, что ортогональный нескалярный оператор~$R$ переводит в~себя их неупорядоченную тройку. Он сохраняет углы между прямыми, откуда $R(\R v_i)=\R v_i$, $R v_i=\la_iv_i$, $\la_i=\pm1$ ($i=1,2,3$). Согласно~\eqref{noort}, $\la_1\la_3=\la_2\la_3=1$, $\la_1=\la_2=\la_3$, то есть ограничение оператора~$R$ на линейную оболочку векторов $v_1,v_2,v_3$ есть скалярный оператор. Следовательно, прямые $\R v_i$ ($i=1,2,3$) лежат в~одной плоскости.

Итак, $PF\subs SP^3(\RP^2)$\т подмножество всех неупорядоченных троек прямых, лежащих в~одной плоскости либо образующих три попарных угла, среди которых найдутся два равных. Теперь, пользуясь коммутативной диаграммой~\eqref{diagr}, найдём образ этого подмножества при отображении факторизации
$\rh'\cln SP^3(\RP^2)\thra M$. Легко видеть, что $(\ta')^{-1}(PF)\subs(S^2)^3$ есть подмножество всех упорядоченных троек векторов
$v_1,v_2,v_3\in S^2$, линейно зависимых либо удовлетворяющих равенству
$\br{(v_1,v_2)^2-(v_2,v_3)^2}\br{(v_2,v_3)^2-(v_3,v_1)^2}\br{(v_3,v_1)^2-(v_1,v_2)^2}=0$. Далее,
\equ{
\rh\br{(\ta')^{-1}(PF)}=\bc{x\in D\cln(x_1^2-x_2^2)(x_2^2-x_3^2)(x_3^2-x_1^2)\det A(x)=0}\subs D.}
Учитывая равенство $M=C\cap D$ и~формулу~\eqref{dM}, получаем, что $\rh\br{(\ta')^{-1}(PF)}\cap C\bw=\pd M$. Образ $H'$\д инвариантного подмножества $\rh\br{(\ta')^{-1}(PF)}$ при отображении $\ta\cln D\bw\thra D/H'\cong M$ совпадает с~$\rh\br{(\ta')^{-1}(PF)}\cap C=\pd M$. Поэтому
$\rh'(PF)\bw=\rh'\bw\circ\ta'\br{(\ta')^{-1}(PF)}=\ta\circ\rh\br{(\ta')^{-1}(PF)}=\pd M$.

Из вышеизложенного можно заключить, что существует гомеоморфизм $PV/G\bw\to M$, переводящий подмножества $PF/G$ и~$PU/G$ соответственно в~$\pd M$ и
$\Int M$. Подмножество $\Int M\subs\R^3$ выпукло как внутренность выпуклого подмножества $M\subs\R^3$, а~значит, односвязно. Гомеоморфное ему подмножество $PU/G\subs PV/G$ также односвязно. Напомним, что отображение $S/G\thra PV/G$, $Gv\to G(\R v)$ при ограничении на
$(S\cap F)/G$ даёт гомеоморфизм $(S\cap F)/G\to PF/G$, а~при ограничении на $(S\cap U)/G$\т двулистное накрытие $(S\cap U)/G\thra PU/G$. Данное накрытие тривиально, поскольку его база $PU/G$ односвязна. Отсюда фактор $S/G$ гомеоморфен топологическому пространству
$\br{M\times\{\pm1\}}\hb{\bc{(x;1)\sim(x;-1)\cln x\in\pd M}}$. Кроме того, имеется гомеоморфизм из трёхмерного замкнутого шара в~подмножество $M\subs\R^3$, переводящий граничную сферу в~$\pd M$. Следовательно, фактор сферы~$S$ по действию~$G$ гомеоморфен несвязному объединению двух экземпляров трёхмерного замкнутого шара с~отождествлением двух экземпляров каждой граничной точки. Таким образом, $S/G\cong S^3$, и~$V/G\cong\R^4$.

Теорема~\ref{n7} доказана.

\subsection{Доказательство теоремы~\ref{n8}}

Аналогично доказательству предыдущей теоремы, можно считать, что $V_0=0$, и~$G\cln V$\т неприводимое представление размерности $n_1=8$.

Поскольку все неприводимые представления группы $\SO_3$ имеют нечётную размерность, группа~$G$ изоморфна~$\SU_2$, а~представление $G\cln V$ совпадает с~представлением $\SU_2\cln V(4)$ (см. \S\,\ref{su2}).

Достаточно доказать, что фактор единичной сферы $S\subs V$ по действию~$G$ гомеоморфен~$S^4$.

Будем обозначать через~$S^3$ единичную сферу эрмитова пространства $\Cbb^2\bw=\Mat_{2\times1}(\Cbb)$ со скалярным произведением $(v_1,v_2):=v_1^*v_2$. Группа $G\cong\SU_2$ действует на $\Cbb^2\sm\{0\}$ и~$S^3$ как на инвариантных подмножествах для её тавтологического представления в~пространстве~$\Cbb^2$.

Непрерывное отображение $\pi\cln\br{\Cbb^2\sm\{0\}}^3\to V\sm\{0\}$, определённое по формуле $\br{\pi(v_1;v_2;v_3)}(v):=\prodl{j=1}{3}(v_j,v)$ ($v\in\Cbb^2$), перестановочно с~действием~$G$. Данное отображение сюръективно, а~его слои суть орбиты действия на $\br{\Cbb^2\sm\{0\}}^3$ группы~$H_0$ всех преобразований $(v_1;v_2;v_3)\to(\la_1v_{\si(1)};\la_2v_{\si(2)};\la_3v_{\si(3)})$, где $\si\in S_3$, $\la_j\in\Cbb$, $\la_1\la_2\la_3=1$: каждый многочлен из $V(4)\sm\{0\}$ разлагается на линейные множители единственным образом с~точностью до их перестановки и~умножения на комплексные числа с~произведением~$1$.

Непрерывная функция $r\cln\br{\Cbb^2\sm\{0\}}^3\to\R_{>0},\,(v_1;v_2;v_3)\to\prodl{j=1}{3}\hn{v_j}$ постоянна на орбитах действий групп $H_0$ и~$G$ на $\br{\Cbb^2\sm\{0\}}^3$. Поэтому существует непрерывная функция $r'\cln V\sm\{0\}\bw\to\R_{>0}$, постоянная на орбитах действия $G\cln\br{V\sm\{0\}}$, такая что $r'\circ\pi\equiv r$. Имеем $r(\la_1v_1;\la_2v_2;\la_3v_3)=\Br{\prodl{j=1}{3}|\la_j|}r(v_1;v_2;v_3)$ и~$\pi(\la_1v_1;\la_2v_2;\la_3v_3)\bw=\Br{\prodl{j=1}{3}\ol{\la_j}}\pi(v_1;v_2;v_3)$, где $\la_j\in\Cbb^*$ и~$v_j\in\Cbb^2\sm\{0\}$, откуда
$r'(\la f)=|\la|r'(f)$ для любых $\la\in\Cbb^*$ и~$f\in V\sm\{0\}$. Значит, $G$\д инвариантное подмножество $S':=(r')^{-1}(1)\subs V\sm\{0\}$ пересекает каждое подмножество $(\R_{>0})v$, $v\in V\sm\{0\}$, ровно в~одной точке, а~расслоение $V\sm\{0\}\thra S,\,v\bw\to\frac{v}{\hn{v}}$ при ограничении на $S'$ даёт гомеоморфизм $G$\д пространств $S'$ и~$S$.

Следовательно, $S/G\cong S'/G$, и~задача свелась к~доказательству соотношения $S'/G\cong S^4$.

Заметим, что $\pi^{-1}(S')=r^{-1}(1)=H_0\br{(S^3)^3}$, $S'=\pi\Br{H_0\br{(S^3)^3}}=\pi\br{(S^3)^3}$. Слои отображения~$\pi|_{(S^3)^3}$ суть пересечения орбит действия $H_0\cln\br{\Cbb^2\sm\{0\}}^3$ с~подмножеством $(S^3)^3$, то есть орбиты действия на $(S^3)^3$ группы всех преобразований $(v_1;v_2;v_3)\bw\to(\la_1v_{\si(1)};\la_2v_{\si(2)};\la_3v_{\si(3)})$, где $\si\in S_3$, $\la_j\in\T$, $\la_1\la_2\la_3=1$. Что же касается композиции $(S^3)^3\thra S'\thra S'/G$ отображения $\pi|_{(S^3)^3}\cln(S^3)^3\thra S'$ и~отображения факторизации $S'\thra S'/G$, то её слои суть орбиты действия на $(S^3)^3$ группы~$H$ всех преобразований
\eqn{\label{elh}
(v_1;v_2;v_3)\to(\la_1gv_{\si(1)};\la_2gv_{\si(2)};\la_3gv_{\si(3)}),\quad
\quad\si\in S_3,\;g\in\SU_2,\;\la_j\in\T,\;\la_1\la_2\la_3=1,}
откуда $S'/G\cong(S^3)^3/H$. Теперь осталось доказать, что $(S^3)^3/H\cong S^4$.

Канонический гомоморфизм $\SU_2\thra\SO_3$ и~тавтологическое действие $\SO_3\cln S^2$ определяют действие $H\cln(S^2)^3$, при котором элемент~\eqref{elh} переводит точку $(v_1;v_2;v_3)$, где $v_j\in S^2\subs\R^3$, в~точку $(gv_{\si(1)};gv_{\si(2)};gv_{\si(3)})$. Композиция отображения $S^3\bw\thra\CP^1,\,v\to\Cbb v$ и~стереографической проекции $\CP^1\to S^2$ (известная как расслоение Хопфа $S^3\thra S^2$) индуцирует отображение $\ta'\cln(S^3)^3\thra(S^2)^3$, перестановочное с~действием~$H$.

Фактор действия $H\cln(S^2)^3$ гомеоморфен фактору тавтологического действия $\SO_3\cln SP^3(S^2)$. Напомним, что фактор тавтологического действия $\Or_3\cln(S^2)^3$ гомеоморфен выпуклому компактному подмножеству $D\subs\R^3$, определённому в~\eqref{ddef}, причём отображение факторизации $\rh\cln(S^2)^3\thra\hbr{(S^2)^3}\Or_3\cong D$ задаётся формулой \eqref{facor}. Фактор тавтологического действия $\Or_3\cln SP^3(S^2)$ гомеоморфен фактору ограничения действия линейной группы всевозможных перестановок координат в~$\R^3$ на инвариантное подмножество $D\subs\R^3$, то есть выпуклому компакту $M:=\bc{x\in D\cln x_1\ge x_2\ge x_3}\subs\R^3$.

Орбита действия $\Or_3\cln SP^3(S^2)$, содержащая неупорядоченную тройку векторов
$v_1,v_2,v_3\in S^2\subs\R^3$, есть объединение орбит неупорядоченных троек $(v_1;v_2;v_3)$ и~$(-v_1;-v_2;-v_3)$ для действия $\SO_3\cln SP^3(S^2)$. Эти орбиты совпадают тогда и~только тогда, когда либо векторы $v_1,v_2,v_3$ лежат в~одной плоскости, либо из трёх попарных углов между ними найдутся два равных. Следовательно, прообраз точки $x\in M\subs\R^3$ при естественном отображении
$\hbr{\br{SP^3(S^2)}}\SO_3\thra\hbr{\br{SP^3(S^2)}}\Or_3\cong M$ состоит из одной точки, если $(x_1-x_2)(x_2-x_3)(x_3-x_1)\det A(x)=0$ (что равносильно, $x\in\pd M$), и~из двух точек в~противном случае. Значит, указанное отображение при ограничении на прообраз $\pd M$ даёт гомеоморфизм, а~при ограничении на прообраз $\Int M$\т двулистное накрытие. Данное накрытие тривиально, поскольку его база $\Int M$ односвязна. Отсюда
\equ{
\hbr{\br{SP^3(S^2)}}\SO_3\cong\wh{M}:=\br{M\times\{\pm1\}}\hb{\bc{(x;1)\sim(x;-1)\cln x\in\pd M}},}
и
\eqn{\label{homeo}
(S^2)^3/H\cong\hbr{\br{SP^3(S^2)}}\SO_3\cong\wh{M}.}

Можно рассматривать $\pd M$ как подмножество пространства $\wh{M}$\: в~последнем два экземпляра границы $\pd M$ склеены в~один.

Пусть $\wh{\rh}\cln(S^2)^3\thra(S^2)^3/H\cong\wh{M}$\т отображение факторизации. Отображение $\ta\cln(S^3)^3\thra\wh{M}$, равное $\wh{\rh}\circ\ta'$, постоянно на орбитах действия $H\cln(S^3)^3$, и, более того,
\eqn{\label{fibh}
\fa p\in(S^3)^3\quad\quad\ta^{-1}\br{\ta(p)}=(\ta')^{-1}\br{H\ta'(p)}=(\ta')^{-1}\br{\ta'(Hp)},}
так как $\ta'$ есть отображение $H$\д пространств $(S^3)^3$ и~$(S^2)^3$. Кроме того,
\begin{gather}
\label{fib}
\fa p=(v_1;v_2;v_3)\in(S^3)^3\quad(\ta')^{-1}\br{\ta'(p)}=\bc{(\la_1v_1;\la_2v_2;\la_3v_3)\cln\la_j\in\T},\\
\label{fibta}
\fa p=(v_1;v_2;v_3)\in(S^3)^3\quad\ta^{-1}\br{\ta(p)}=
\bc{(\la_1gv_{\si(1)};\la_2gv_{\si(2)};\la_3gv_{\si(3)})\cln\si\in S_3,\;g\in\SU_2,\;\la_j\in\T}.
\end{gather}

Векторам $v_1,v_2\in\Cbb^2=\Mat_{2\times1}(\Cbb)$ сопоставим матрицу $\br{v_1\big|v_2}\in\Mat_{2\times2}(\Cbb)$ и~число $|v_1,v_2|:=\det\br{v_1\big|v_2}\in\Cbb$.

Для непрерывных функций $\ta'_j,\ta_j\cln(S^3)^3\to\Cbb$ ($j=1,2$), определённых формулами
\begin{align*}
\ta'_1(v_1;v_2;v_3)&:=(v_3,v_1)|v_3,v_2|+(v_3,v_2)|v_3,v_1|;\\
\ta'_2(v_1;v_2;v_3)&:=|v_1,v_2|^2;\\
 \ta_j(v_1;v_2;v_3)&:=\ta'_j(v_1;v_2;v_3)\cdot\ta'_j(v_2;v_3;v_1)\cdot\ta'_j(v_3;v_1;v_2)\quad(j=1,2),
\end{align*}
имеем $\ta'_j(v_1;v_2;v_3)=\ta'_j(v_2;v_1;v_3)$ ($v_1,v_2,v_3\in S^3$, $j=1,2$),
\eqn{\label{tha}
\fa j=1,2\,\fa p=(v_1;v_2;v_3)\in(S^3)^3\,\fa\la_1,\la_2,\la_3\in\T\quad\quad\ta_j(\la_1v_1;\la_2v_2;\la_3v_3)=(\la_1\la_2\la_3)^{2j}\ta_j(p),}
и~поэтому функции $\ta_1$ и~$\ta_2$ постоянны на орбитах действия $H\cln(S^3)^3$.

Выясним, как устроены подмножества $\ta_1^{-1}(0),\ta_2^{-1}(0)\subs(S^3)^3$.

Пусть $I'_1,I\"_1,I_2\subs\pd M\subs\wh{M}$\т образы непрерывных инъективных отображений
\equ{
\begin{aligned}
& \ga'_1\cln&\bs{-1;-\frac{1}{2}}&\to\pd M,&t&\to(2t^2-1;t;t),&&\ &&\\
&\ga\"_1\cln&\bs{-\frac{1}{2};1} &\to\pd M,&t&\to(t;t;2t^2-1),&&\ &&\\
&  \ga_2\cln&\bs{-1;1}&\to\pd M,&t&\to(1;t;t)&&\ &&
\end{aligned}}
соответственно, и~$I_1:=I'_1\cup I\"_1\subs\pd M\subs\wh{M}$.

\begin{prop} Справедливы равенства $\ta_j^{-1}(0)=\ta^{-1}(I_j)$ \ter{$j=1,2$}.
\end{prop}

\begin{proof} Очевидно, что $\ta_2^{-1}(0)\subs(S^3)^3$ есть подмножество всех троек векторов из $S^3\subs\Cbb^2$, среди которых найдутся два пропорциональных, то есть прообраз под действием $\ta'$ подмножества всех троек векторов двумерной сферы~$S^2$, по крайней мере два из которых равны. Указанное подмножество в~$(S^2)^3$ совпадает с~$(\wh{\rh})^{-1}(I_2)$.

Далее, для точки $p=(v_1;v_2;v_3)\in(S^3)^3$ равенство $\ta'_1(p)=0$ эквивалентно тому, что $\ta'(p)\in(S^2)^3$ есть тройка векторов, первые два из которых симметричны относительно прямой, натянутой на третий. Доказательство этого факта сводится к~случаю $v_3=\rbmat{1\\0}$ при помощи оператора из $\SU_2$, переводящего вектор~$v_3$ в~вектор $\rbmat{1\\0}$ (такой оператор существует). Если же $v_j=\rbmat{a_j\\b_j}$ ($j=1,2,3$), $a_3=1$, $b_3=0$, то $\ta'_1(p)=a_1b_2+a_2b_1$, что влечёт требуемое.

Значит, подмножество $\ta_1^{-1}(0)\subs(S^3)^3$ совпадает с~прообразом под действием $\ta'$ подмножества всех троек векторов сферы~$S^2$, какие-либо два из которых симметричны относительно прямой, натянутой на третий. Данное подмножество в~$(S^2)^3$ и~есть $(\wh{\rh})^{-1}(I_1)$.

Осталось воспользоваться соотношением $\ta=\wh{\rh}\circ\ta'$ и~получить нужные равенства. (Всюду при доказательстве использовался гомеоморфизм~\eqref{homeo}.)
\end{proof}

Из определений подмножеств $I'_1,I\"_1,I_2\subs\pd M$ вытекает, что указанные подмножества гомеоморфны отрезку, причём
$I'_1\cap I\"_1=\BC{\br{-\frac{1}{2};-\frac{1}{2};-\frac{1}{2}}}$, $I'_1\cap I_2=\bc{(1;-1;-1)}$ и~$I\"_1\cap I_2=\bc{(1;1;1)}$. Следовательно, $I_1\cong I_2\cong[0;1]$ и~$I_1\cap I_2=\bc{(1;1;1);(1;-1;-1)}$. Существует гомеоморфизм из подмножества $M\subs\R^3$ в~единичный замкнутый шар пространства~$\R^3$, переводящий $\pd M$ в~единичную сферу~$S^2$, а~подмножества $I_{1,2}\subs\pd M$\т в~полуокружности соответственно
$\bc{y\in\R^3\cln y_2=\pm\sqrt{1-y_1^2},\,y_3=0}\subs S^2$. В~свою очередь, существует гомеоморфизм из пространства $\wh{M}$ в~сферу
$S^3=\{y\in\R^4\cln(y,y)=1\}$, переводящий подмножество $\pd M$ в~двумерную сферу $\{y\in S^3\cln y_4=0\}$, а~подмножества $I_{1,2}$\т в~полуокружности соответственно $\bc{y\in\R^4\cln y_2=\pm\sqrt{1-y_1^2},\,y_3=y_4=0}$. Этот гомеоморфизм позволяет отождествить пространство $\wh{M}$ со сферой~$S^3$, подмножества $I_1$ и~$I_2$\т с~полуокружностями в~$S^3$, задаваемыми уравнениями $y_2=\sqrt{1-y_1^2}$ и~$y_2=-\sqrt{1-y_1^2}$ соответственно, а~отображение~$\ta$ рассматривать как отображение $\ta\cln(S^3)^3\thra S^3$, удовлетворяющее условиям $\ta_j^{-1}(0)=\ta^{-1}(I_j)$ ($j=1,2$).

Из формул \eqref{fibta} и~\eqref{tha} можно заключить, что\:
\begin{nums}{9}
\item отображения $(S^3)^3\to\R_{\ge0},\,p\to\bm{\ta_j(p)}$ ($j=1,2$) и~$(S^3)^3\to\Cbb,\,p\to\br{\ta_1(p)}^2\,\ol{\br{\ta_2(p)}}$ постоянны на слоях
$\ta$, а~значит, имеют вид соответственно $\al_1\circ\ta$, $\al_2\circ\ta$, $\al\circ\ta$ для непрерывных отображений
$\al_1,\al_2\cln S^3\to\R_{\ge0}$ и~$\al\cln S^3\to\Cbb$\~
\item если $K\subs S^3\times\Cbb^2$\т образ непрерывного отображения $\wt{\ta}\cln(S^3)^3\to S^3\times\Cbb^2,\,p\bw\to\br{\ta(p);\ta_1(p);\ta_2(p)}$,
то
\eqn{\label{orb}
\fa p\in(S^3)^3\quad\quad K\cap\Br{\bc{\ta(p)}\times\Cbb^2}=\BC{\br{\ta(p);\la\ta_1(p);\la^2\ta_2(p)}\cln\la\in\T}.}
\end{nums}

\begin{lemma} Фактор $(S^3)^3/H$ гомеоморфен пространству~$K$.
\end{lemma}

\begin{proof} Отображение $\wt{\ta}\cln(S^3)^3\thra K$ постоянно на орбитах действия $H\cln(S^3)^3$. Осталось доказать, что оно их разделяет.

Предположим, что $p_0,p\in(S^3)^3$ и~$\wt{\ta}(p_0)=\wt{\ta}(p)$, то есть $\ta(p_0)=\ta(p)$, $\ta_1(p_0)=\ta_1(p)$, $\ta_2(p_0)=\ta_2(p)$. Нужно доказать, что $p\in Hp_0$.

Будем считать, что $p_0=(v_1;v_2;v_3)$ и~$p=(\la_1v_1;\la_2v_2;\la_3v_3)$, где $\la_j\in\T$\: согласно \eqref{fibh} и~\eqref{fib}, к~этому можно свести заменой точек $p_0$ и~$p$ точками соответственно $hp_0$ и~$p$ для некоторого $h\in H$.

Достаточно доказать, что многочлены $\pi(p_0),\pi(p)\in S'$ принадлежат одной орбите действия $G\cln S'$, так как
$p\in Hp_0\Lra\pi(p)\in G\br{\pi(p_0)}$.

Ясно, что $\pi(p)=\ol{\la_0}\pi(p_0)$, где $\la_0:=\la_1\la_2\la_3\in\T$. В~силу~\eqref{tha}, $\ta_1(p_0)=\ta_1(p)\bw=\la_0^2\ta_1(p_0)$, то есть либо $\la_0^2=1$, либо $\ta_1(p_0)=0$.

Если $\la_0=\pm1$, то многочлен $\pi(p_0)$ переходит в~$\pi(p)$ под действием элемента $\pm E\bw\in\SU_2$.

Допустим, что $\ta_1(p_0)=0$.

Поскольку группа~$H$ включает в~себя всевозможные перестановки трёх векторов сферы~$S^3$, а~тавтологическое действие $\SU_2\cln S^3$ транзитивно, можно считать, что $\ta'_1(v_1;v_2;v_3)=0$, $v_j=\rbmat{a_j\\b_j}$ ($j=1,2,3$), $a_3=1$, $b_3=0$.

Имеем $a_1b_2+a_2b_1=\ta'_1(p_0)=0$. Значит,
\equ{
\ta_2(p_0)=b_1^2b_2^2(a_1b_2-a_2b_1)^2\bw=-4b_1^2b_2^2(a_1b_2)(a_2b_1)\bw=-4(a_1a_2)(b_1b_2)^3,}
а~коэффициенты многочлена $\pi(p_0)$, расположенные в~порядке убывания степени вхождения в~моном первой координаты, суть $\ol{(a_1a_2)},0,\ol{(b_1b_2)},0$. Согласно~\eqref{tha}, $\ta_2(p_0)\bw=\ta_2(p)=\la_0^4\ta_2(p_0)$,
$(a_1a_2)(b_1b_2)^3\bw=\la_0^4(a_1a_2)(b_1b_2)^3$. Таким образом,
\equ{
|a_1a_2|=|\la_0a_1a_2|,\quad|b_1b_2|=|\la_0b_1b_2|,\quad(a_1a_2)(b_1b_2)^3=(\la_0a_1a_2)(\la_0b_1b_2)^3.}
Следовательно, найдётся число $\la\in\T$, для которого $(\la_0a_1a_2)=\la^3(a_1a_2)$ и~$(\la_0b_1b_2)\bw=\la^{-1}(b_1b_2)$. Заметим, что элемент $g:=\diag(\la;\ol{\la})\in\SU_2$ переводит многочлен $\pi(p_0)$ в~многочлен с~коэффициентами $\ol{(\la^3a_1a_2)},0,\ol{(\la^{-1}b_1b_2)},0$, то есть $g\pi(p_0)=\ol{\la_0}\pi(p_0)\bw=\pi(p)$.
\end{proof}

Напомним, что требовалось доказать соотношение $(S^3)^3/H\cong S^4$. Теперь задача свелась к~доказательству того, что $K\cong S^4$.

Ясно, что $I_j=\al_j^{-1}(0)$ ($j=1,2$). Рассмотрим подмножества $U_1:=S^3\sm I_1$, $U_2:=S^3\sm I_2$ и~$U:=U_1\cap U_2=\al^{-1}(\Cbb^*)$ сферы~$S^3$. Подмножество $U=S^3\sm(I_1\cup I_2)=\{y\bw\in S^3\cln y_1^2+y_2^2<1\}$ гомеоморфно декартову произведению открытого круга на окружность. Отображение
$\al|_U\cln U\to\Cbb^*$ индуцирует гомоморфизм $\al_*\cln\pi_1(U)\to\pi_1(\Cbb^*)$, причём $\pi_1(U)\cong\pi_1(\Cbb^*)\cong\Z$.

\begin{prop} Гомоморфизм $\al_*\cln\pi_1(U)\to\pi_1(\Cbb^*)$ является изоморфизмом.
\end{prop}

\begin{proof} Достаточно доказать, что образ гомоморфизма $\al_*$ содержит образующий элемент группы $\pi_1(\Cbb^*)\cong\Z$.

Фиксируем вещественные числа $a$ и~$b$, такие что $a>b>0$ и~$a^2+b^2=1$, и~векторы $v_{1,2}:=\rbmat{a\\\pm b}\in S^3$. Для непрерывного отображения $[0;1]\times\bs{0;\frac{\pi}{2}}\to S^3$, сопоставляющего произвольным числам $t\in[0;1]$, $\be\in\bs{0;\frac{\pi}{2}}$ вектор $v_3(\be;t):=\rbmat{\cos\be\\\sin\be\cdot e^{2\pi it}}\in S^3$, имеем $v_3(0;t)=v_3(0;0)=\rbmat{1\\0}$ ($t\in[0;1]$).

Непрерывная функция $\ta_3\cln(S^3)^3\to\Cbb,\,(v_1;v_2;v_3)\to\br{\ta'_1(v_2;v_3;v_1)\cdot\ta'_1(v_3;v_1;v_2)}^2\bw\cdot\ol{\br{\ta_2(v_1;v_2;v_3)}}$ удовлетворяет соотношению
\eqn{\label{comp}
\fa p\in(S^3)^3\quad\quad(\al\circ\ta)(p)=\br{\ta'_1(p)}^2\ta_3(p).}

Заметим, что $\ta'_1\br{v_2;v_3(0;0);v_1}\bw=-b(3a^2-b^2)$, $\ta'_1\br{v_3(0;0);v_1;v_2}\bw=b(3a^2-b^2)$, $\ta_2\br{v_1;v_2;v_3(0;0)}\bw=4a^2b^6$,
откуда $\ta_3\br{v_1;v_2;v_3(0;0)}\ne0$. Открытое подмножество $\ta_3^{-1}(\Cbb^*)\subs(S^3)^3$, содержащее точку $\br{v_1;v_2;v_3(0;0)}$, содержит и~односвязное подмножество $\BC{\br{v_1;v_2;v_3(\be;t)}\cln\be\in[0;\be_0],\,t\in[0;1]}\subs(S^3)^3$ для некоторого $\be_0\in\br{0;\frac{\pi}{2}}$. Следовательно, петля $\ga\cln[0;1]\to(S^3)^3,\,t\to\br{v_1;v_2;v_3(\be_0;t)}$ стягиваема в~$\ta_3^{-1}(\Cbb^*)$, а~значит, $(\ta_3\circ\ga)\br{[0;1]}\subs\Cbb^*$ и~$[\ta_3\circ\ga]=0\in\pi_1(\Cbb^*)\cong\Z$. Кроме того, $(\ta'_1\circ\ga)\br{[0;1]}\subs\Cbb^*$ и~$[\ta'_1\circ\ga]=1\in\pi_1(\Cbb^*)\cong\Z$, поскольку
\ml{
\ta'_1\br{\ga(t)}=
(a\cos\be_0+b\sin\be_0\cdot e^{-2\pi it})(-b\cos\be_0-a\sin\be_0\cdot e^{2\pi it})+\\
+(a\cos\be_0-b\sin\be_0\cdot e^{-2\pi it})( b\cos\be_0-a\sin\be_0\cdot e^{2\pi it})
=-2\cos\be_0\sin\be_0\br{a^2e^{2\pi it}+b^2e^{-2\pi it}}}
и $\bm{a^2e^{2\pi it}}>\bm{b^2e^{-2\pi it}}$ для $t\in[0;1]$.

Согласно~\eqref{comp}, $(\al\circ\ta\circ\ga)\br{[0;1]}\subs\Cbb^*$ и~$[\al\circ\ta\circ\ga]=2[\ta'_1\circ\ga]+[\ta_3\circ\ga]=2\in\pi_1(\Cbb^*)\cong\Z$. Другими словами, петля $\ta\circ\ga\cln[0;1]\to S^3$ целиком содержится в~подмножестве $U\subs S^3$, а~образ элемента $[\ta\circ\ga]\in\pi_1(U)$ при гомоморфизме $\al_*$ равен $2\in\pi_1(\Cbb^*)\cong\Z$.

Пусть $g:=\diag(i;-i)\in\SU_2$. Для любого $t\in\bs{0;\frac{1}{2}}$ имеем $gv_1=iv_2$, $gv_2=iv_1$, $gv_3(\be_0;t)=iv_3\br{\be_0;t+\frac{1}{2}}$, и, в~силу~\eqref{fibta}, $\ta\Br{\ga\br{t+\frac{1}{2}}}=\ta\br{\ga(t)}$. Значит, $[\ta\circ\ga]=2c$, где $c\in\pi_1(U)$. Элемент~$c$ является искомым\: $2\al_*(c)=\al_*(2c)=\al_*[\ta\circ\ga]\bw=2\in\pi_1(\Cbb^*)\cong\Z$, $\al_*(c)=1\in\pi_1(\Cbb^*)\cong\Z$.
\end{proof}

Непрерывное отображение $U\to\Cbb^*,\,y\to y_3+y_4i$ также индуцирует изоморфизм $\pi_1(U)\to\pi_1(\Cbb^*)$. Можно считать, что он совпадает с~$\al_*$, сделав при необходимости замену координат $(y_1;y_2;y_3;y_4)\to(y_1;y_2;y_3;-y_4)$\: группы $\pi_1(U)$ и~$\pi_1(\Cbb^*)$ изоморфны группе~$\Z$, имеющей только два автоморфизма.

Существует непрерывное отображение $s\cln U\to\R$, удовлетворяющее равенству $\Arg\al(y)=s(y)+\Arg(y_3+y_4i)$ для всякого $y\in U$.

Перейдём к~построению гомеоморфизма пространства~$K$ и~сферы~$S^4$. Последнюю будет удобно понимать как единичную сферу
$\bc{(z_0;z_1;z_2)\in\R\oplus\Cbb^2\cln z_0^2+|z_1|^2+|z_2|^2=1}$ евклидова пространства $\R\oplus\Cbb^2$.

Отображение $\ph_0\cln\Cbb^2\to\R^3$, заданное формулой
\equ{
\ph_0(z_1;z_2):=
\case{
\bbr{\frac{|z_2|^2-|z_1|^2}{\sqrt{|z_1|^2+|z_2|^2}};
\frac{2\Rea(z_1^2\ol{z_2})}{|z_1|\sqrt{|z_1|^2+|z_2|^2}};\frac{2\Img(z_1^2\ol{z_2})}{|z_1|\sqrt{|z_1|^2+|z_2|^2}}},
&z_1\ne0;\\
(|z_2|;0;0),&z_1=0,}}
непрерывно, сюръективно, сохраняет длины векторов соответствующих евклидовых пространств, а~его слои суть орбиты действия $\T\cln\Cbb^2,\,\la\cln(z_1;z_2)\to(\la z_1;\la^2z_2)$. Прообразы подмножеств $\R_{\ge0}\times\bc{(0;0)}$ и~$\R_{\le0}\times\bc{(0;0)}$ пространства~$\R^3$ под действием~$\ph_0$ совпадают с~подмножествами $\{0\}\times\Cbb\subs\Cbb^2$ и~$\Cbb\times\{0\}\subs\Cbb^2$ соответственно. Следовательно, отображение $\ph\cln S^4\to\R^4\cong\R\oplus\R^3,\,(z_0;z_1;z_2)\to\br{z_0;\ph_0(z_1;z_2)}$ непрерывно, его образом является единичная сфера~$S^3$ пространства~$\R^4$, а~слоями\т орбиты действия $\T\cln S^4,\,\la\cln(z_0;z_1;z_2)\to(z_0;\la z_1;\la^2z_2)$\:
\eqn{\label{fibr}
\fa z=(z_0;z_1;z_2)\in S^4\quad\quad\ph^{-1}\br{\ph(z)}=\bc{(z_0;\la z_1;\la^2z_2)\cln\la\in\T}.}
Кроме того, $\ph^{-1}(I_j)=\bc{(z_0;z_1;z_2)\in S^4\cln z_j=0}$ и~$\ph^{-1}(U_j)=\bc{(z_0;z_1;z_2)\in S^4\cln z_j\ne0}$ ($j=1,2$).

Существует непрерывная функция $s_0\cln[0;1]\to\R$, равная нулю в~некоторой (односторонней) окрестности точки~$1$ и~удовлетворяющая равенству $s_0(t)+s_0(1-t)=1$ для всех $t\in[0;1]$. Пусть $j=1,2$, а~$s_j\cln\ph^{-1}(U_j)\to\R$\т функция, принимающая в~точке $z=(z_0;z_1;z_2)\in\ph^{-1}(U_j)$ значение
\equ{
s_j(z):=
\case{
\frac{(-1)^jj}{2}\cdot s\br{\ph(z)}\cdot s_0\Br{\frac{|z_j|^2}{|z_1|^2+|z_2|^2}},&z\in\ph^{-1}(U);\\
0,&z\in\ph^{-1}(U_j\sm U).}}
Функция~$s_j$ непрерывна\: она равна нулю в~некоторой окрестности подмножества $\ph^{-1}(U_j\sm U)$, поскольку при $z\in\ph^{-1}(U_j\sm U)$ число $\frac{|z_j|^2}{|z_1|^2+|z_2|^2}$ равно~$1$. Для точки $z=(z_0;z_1;z_2)\in S^4$ имеем $\al_j\br{\ph(z)}\bw=0\bw\Lra\ph(z)\in I_j\Lra z_j=0$, поэтому
функция
\equ{
\ph_j\cln\;\;S^4\to\Cbb,\;\;z=(z_0;z_1;z_2)\;\;\to\;\;
\case{
\al_j\br{\ph(z)}\cdot\frac{z_j}{|z_j|}\cdot\exp\br{-is_j(z)},&z_j\ne0;\\
0,&z_j=0}}
удовлетворяет соотношениям $|\ph_j(z)|=\al_j\br{\ph(z)}$, $\ph_j(z)\bw=0\bw\Lra z_j=0$ ($z=(z_0;z_1;z_2)\in S^4$), а~значит, непрерывна.

Если $z=(z_0;z_1;z_2)\in\ph^{-1}(U)$ и~$y:=\ph(z)\in S^3$, то $s_0\Br{\frac{|z_1|^2}{|z_1|^2+|z_2|^2}}+s_0\Br{\frac{|z_2|^2}{|z_1|^2+|z_2|^2}}=1$, $s_2(z)-2s_1(z)=s(y)$, комплексное число $y_3+y_4i$ равно $\frac{2z_1^2\ol{z_2}}{|z_1|\sqrt{|z_1|^2+|z_2|^2}}$, и
\eqn{\label{args}
2\Arg\br{\ph_1(z)}-\Arg\br{\ph_2(z)}=\Arg\br{z_1^2\ol{z_2}}+\br{s_2(z)-2s_1(z)}=\Arg(y_3+y_4i)+s(y)=\Arg\al(y).}

Рассмотрим непрерывное отображение $\wt{\ph}\cln S^4\to S^3\times\Cbb^2,\,z\to\br{\ph(z);\ph_1(z);\ph_2(z)}$.

\begin{lemma} Отображение $\wt{\ph}$ инъективно, а~его образ есть~$K$.
\end{lemma}

\begin{proof} Положим $K':=\wt{\ph}(S^4)\subs S^3\times\Cbb^2$. Из~\eqref{fibr} следует, что\:
\begin{gather}
\fa j=1,2\;\fa z=(z_0;z_1;z_2)\in S^4\;\fa\la\in\T\quad\quad\ph_j(z_0;\la z_1;\la^2z_2)=\la^j\ph_j(z);\notag\\
\fa z\in S^4\quad\quad K'\cap\Br{\bc{\ph(z)}\times\Cbb^2}=\BC{\br{\ph(z);\la\ph_1(z);\la^2\ph_2(z)}\cln\la\in\T}.\label{orb'}
\end{gather}
Значит, любые две точки одного слоя отображения $\wt{\ph}$ имеют вид $z=(z_0;z_1;z_2)\in S^4$ и~$(z_0;\la z_1;\la^2z_2)\in S^4$, $\la\in\T$, причём $\la^j\ph_j(z)=\ph_j(z)$ ($j=1,2$). Но $\ph_j(z)=0\bw\Lra z_j=0$, откуда $\la^jz_j=z_j$ ($j=1,2$), то есть $(z_0;\la z_1;\la^2z_2)\bw=z$. Тем самым доказана инъективность отображения $\wt{\ph}$.

Пусть $y\in S^3$, $K_y:=\bc{w\in\Cbb^2\cln(y;w)\in K}\subs\Cbb^2$, $K'_y:=\bc{w\bw\in\Cbb^2\cln(y;w)\bw\in K'}\bw\subs\Cbb^2$. Докажем, что $K_y=K'_y$.

Найдутся точки $p\in(S^3)^3$ и~$z\in S^4$, такие что $\ta(p)=\ph(z)=y$. В~силу \eqref{orb} и~\eqref{orb'}, $K_y$ и~$K'_y$\т орбиты действия $\T\cln\Cbb^2,\,\la\cln(w_1;w_2)\to(\la w_1;\la^2w_2)$, содержащие точки $\br{\ta_1(p);\ta_2(p)}$ и~$\br{\ph_1(z);\ph_2(z)}$ соответственно. Заметим, что $\bm{\ta_j(p)}=\al_j(y)=\bm{\ph_j(z)}$ ($j=1,2$), а~если $\al_1(y),\al_2(y)\ne0$, то $z\in\ph^{-1}(U)$ и, согласно~\eqref{args}, аргумент числа $\br{\ph_1(z)}^2\ol{\br{\ph_2(z)}}$ равен аргументу числа $\al(y)=\br{\ta_1(p)}^2\ol{\br{\ta_2(p)}}$. Поэтому $K_y=K'_y$.

Ввиду произвольности точки $y\in S^3$, подмножество $K\subs S^3\times\Cbb^2$ совпадает с~$K'\bw=\wt{\ph}(S^4)$.
\end{proof}

Теперь, когда построен гомеоморфизм $\wt{\ph}\cln S^4\to K$, теорема~\ref{n8} полностью доказана.

\subsection{Доказательство теоремы~\ref{n53}}

Можно считать, что $V_0=0$, а~представление $G\cln V$ есть прямая сумма неприводимых представлений $G\cln V_1$ и~$G\cln V_2$
размерностей $5$ и~$3$ соответственно. Эти представления группы $G\cong\SO_3$ можно отождествить соответственно с~её действием на пространстве $\bc{A\in\End(\R^3)\cln A=A^T,\,\tr A=0}$ матричными сопряжениями и~с~тавтологическим представлением $\SO_3\cln\R^3$.

Положим $D:=\BC{\diag(\la_1,\la_2\la_3)\in\End(\R^3)\cln\suml{i=1}{3}\la_i=0,\,\la_1\bw\ge\la_2\bw\ge\la_3}\subs V_1$. Далее, для оператора $A\in D$ обозначим через~$Z_A$ подгруппу $\{g\in\SO_3\cln gA=Ag\}\subs\SO_3$. Действие $Z_A\cln\R^3$ задаёт отношение эквивалентности~$\tau_A$ на пространстве~$\R^3$.

Каждая орбита действия $G\cln V$ пересекает подмножество $D\times\R^3\subs V$, причём элементы $(A;v)$ и~$(A';v')$ ($A,A'\bw\in D$, $v,v'\bw\in\R^3$) лежат в~одной орбите тогда и~только тогда, когда $A=A'$ и $v\,\tau_A\,v'$. Отсюда каноническое отображение
$D\times\R^3\hra V\thra V/G$ сюръективно, а~его слои суть декартовы произведения одноэлементных подмножеств $\{A\}\subs D$ на классы соответствующих отношений эквивалентности~$\tau_A$ в~$\R^3$. Ограничивая действия $Z_A\cln\R^3$ ($A\in D$) на инвариантную единичную сферу~$S^2$, получаем, что фактор $V/G$ гомеоморфен факторпространству топологического пространства $D\times\R_{\ge0}\times S^2$ по отношению эквивалентности
\equ{
(A;r;v)\sim(A;r;v')\cln\quad(r=0)\vee(v\,\tau_A\,v')\quad(A\in D,\,r\in\R_{\ge0},\,v,v'\in S^2).}

Подмножество $D\subs\End(\R^3)$ содержит операторы $A_0:=\diag(1;1;-2)$ и~$A_1:=\diag(2;-1;-1)$, гомеоморфно замкнутой полуплоскости, а~его граница есть объединение лучей $I_i:=\R_{\ge0}A_i$ ($i=0,1$), пересекающихся по нулевому оператору. Следовательно, существует гомеоморфизм топологических пространств $D$ и
\equ{
\br{\R_{\ge0}\bw\times[0;1]}\hb{\bc{(0;t)\bw\sim(0;t')\cln t,t'\in[0;1]}},}
переводящий каждый из лучей $I_i\subs D$ ($i=0,1$) в~подмножество $\R_{\ge0}\times\{i\}$ (а~точку $0\in D$\т в~класс эквивалентности $\{0\}\times[0;1]\subs\R_{\ge0}\times[0;1]$). В дальнейшем мы будем отождествлять эти топологические пространства с~помощью указанного гомеоморфизма.

Всякая подгруппа вида $Z_A$, $A\in D$, содержит подгруппу~$H$ всех диагональных операторов группы~$\SO_3$. Более точно,
\eqn{\label{cent}
Z_A=\case{
\SO_3,&A=0;\\
Z_{A_i},&A\in I_i\sm\{0\}\quad(i=0,1);\\
H,&A\in\Int D.}}

Выясним, как устроен фактор $S^2/H$. Для этого заметим, что $H$ есть чётная подгруппа конечной группы отражений, соответствующей системе корней
$\{\pm e_i\cln i=1,2,3\}$, где $(e_1,e_2,e_3)$\т стандартный базис в~$\R^3$. Через~$C$ обозначим камеру Вейля
$\bc{(x_1;x_2;x_3)\bw\in\R^3\cln x_i\bw\ge0}\bw\subs\R^3$, а~через~$M$\т её пересечение со сферой~$S^2$. Отображение
$M\bw\times\{\pm1\}\bw\to S^2,\,\br{(x_1;x_2;x_3);j}\bw\to(jx_1;x_2;x_3)$ ($(x_1;x_2;x_3)\bw\in M$, $j=\pm1$) задаёт естественный гомеоморфизм~$\ph$ топологических пространств
\eqn{\label{whM}
\wh{M}:=\br{M\times\{\pm1\}}\hb{\bc{(v;1)\sim(v;-1)\cln v\in\pd M}}}
и~$S^2/H$. В~дальнейшем будем отождествлять их при помощи~$\ph$.

Пусть $A\in D$\т произвольный оператор. Каждая орбита действия группы $H\subs Z_A$ на сфере~$S^2$ целиком содержится в одном классе отношения эквивалентности~$\tau_A$, что позволяет ввести индуцированное отношение эквивалентности~$\wt{\tau_A}$ на факторе $S^2/H\cong\wh{M}$. В таком случае фактор $V/G$ гомеоморфен факторпространству топологического пространства $D\times\R_{\ge0}\times\wh{M}$ по отношению эквивалентности
\equ{
(A;r;w)\sim(A;r;w')\cln\quad(r=0)\vee(w\,\wt{\tau_A}\,w')\quad\br{A\in D,\,r\in\R_{\ge0},\,w,w'\in\wh{M}\,}.}

Положим $\tau_i:=\tau_{A_i}$, $\wt{\tau_i}:=\wt{\tau_{A_i}}$ ($i=0,1$). Согласно~\eqref{cent}, отношение эквивалентности~$\wt{\tau_A}$
при $A\in\Int D$ тривиально (все элементы попарно не эквивалентны), при $A\in I_i\sm\{0\}$ ($i=0,1$) совпадает с отношением $\wt{\tau_i}$,
а~при $A=0$ имеет единственный класс эквивалентности\т $\wh{M}$. Отсюда фактор $V/G$ гомеоморфен факторпространству декартова произведения $\R_{\ge0}^2\times[0;1]\times\wh{M}$ по отношению эквивалентности
\eqn{\label{equi}
\begin{aligned}
(0;r;t;w)&\sim(0;r;t';w')&&&&\br{r\in\R_{\ge0},\,t,t'\in[0;1],\,w,w'\in\wh{M}\,};\\
(s;0;t;w)&\sim(s;0;t;w') &&&&\br{s\in\R_{\ge0},\,t\in[0;1],\,w,w'\in\wh{M}\,};\\
(s;r;t;w)&\sim(s;r;t;w') &&&&\br{s,r\in\R_{\ge0},\,t=0,1,\,w,w'\in\wh{M},\,w\,\wt{\tau_t}\,w'}.
\end{aligned}}

Установим связь между отношениями эквивалентности $\wt{\tau_0}$ и~$\wt{\tau_1}$.

Рассмотрим оператор $g\in\SO_3$, переводящий базисные векторы $e_1,e_2,e_3\in\R^3$ базиса в~векторы $e_3,e_1,e_2$ соответственно. Легко видеть, что $A_0=-gA_1g^{-1}$, $Z_{A_0}=gZ_{A_1}g^{-1}$, $\tau_0=g\tau_1$ (т.\,е. $v\,\tau_1\,v'\bw\Lra(gv)\,\tau_0\,(gv')$). Кроме того, оператор~$g$ нормализует подгруппу $H\subs\SO_3$ и~потому индуцирует гомеоморфизм $\wt{g}\cln S^2/H\to S^2/H$, причём $\wt{\tau_0}=\wt{g}\wt{\tau_1}$.

\begin{prop} В классе гомеоморфизмов $\wh{M}\to\wh{M}$ гомеоморфизмы $\wt{g}$ и $\id_{\wh{M}}$ гомотопны, то есть существует семейство гомеоморфизмов $\wt{g}_t\cln\wh{M}\bw\to\wh{M}$ \ter{$t\in[0;1]$}, такое что $\wt{g}_0\equiv\id_{\wh{M}}$, $\wt{g}_1\equiv\wt{g}$, а~отображение
$[0;1]\times\wh{M}\bw\to\wh{M},\,(t;w)\bw\to\wt{g}_t(w)$ непрерывно.
\end{prop}

\begin{proof} Ясно, что $gC=C$ и~$gM=M$. Гомеоморфизм $\wt{g}\cln\wh{M}\to\wh{M}$ индуцируется гомеоморфизмом $g|_M\cln M\bw\to M$ с~учётом определения~\eqref{whM}. В классе гомеоморфизмов $M\to M$ гомеоморфизм $g|_M$, будучи поворотом сферического треугольника~$M$, гомотопен $\id_M$, что немедленно влечёт требуемое.
\end{proof}

Гомеоморфизм пространства $\R_{\ge0}^2\times[0;1]\times\wh{M}$ в~себя, действующий по формуле $(s;r;t;w)\to\br{s;r;t;\wt{g}_t(w)}$, превращает отношение эквивалентности~\eqref{equi} в~отношение эквивалентности
\eqn{\label{equii}
\begin{aligned}
(0;r;t;w)&\sim(0;r;t';w')&&&&\br{r\in\R_{\ge0},\,t,t'\in[0;1],\,w,w'\in\wh{M}\,};\\
(s;0;t;w)&\sim(s;0;t;w') &&&&\br{s\in\R_{\ge0},\,t\in[0;1],\,w,w'\in\wh{M}\,};\\
(s;r;t;w)&\sim(s;r;t;w') &&&&\br{s,r\in\R_{\ge0},\,t=0,1,\,w,w'\in\wh{M},\,w\,\wt{\tau_0}\,w'}.
\end{aligned}}
Следовательно, фактор $V/G$ гомеоморфен факторпространству декартова произведения $\R_{\ge0}^2\bw\times[0;1]\bw\times\wh{M}$ по отношению эквивалентности~\eqref{equii}.

При помощи сферической замены координат
$x_1=\sin\br{\frac{\pi}{2}u_1}\cos\br{\frac{\pi}{2}u_2}$, $x_2=\sin\br{\frac{\pi}{2}u_1}\sin\br{\frac{\pi}{2}u_2}$, $x_3=\cos\br{\frac{\pi}{2}u_1}$ получаем, что $M\cong[0;1]^2\hb{\bc{(0;u_2)\bw\sim(0;u'_2)\cln u_2,u'_2\bw\in[0;1]}}$, а~пространство~$\wh{M}$ гомеоморфно факторпространству декартова произведения $[0;1]^2\times\{\pm1\}$ по отношению эквивалентности
\equ{
\begin{aligned}
(0;u_2;j)  &\sim(0;u'_2;j')  &&&&\br{u_2,u'_2\in[0;1],\,j,j'=\pm1};\\
(1;u_2;1)  &\sim(1;u_2;-1)   &&&&\br{u_2\in[0;1]};\\
(u_1;u_2;1)&\sim(u_1;u_2;-1) &&&&\br{u_1\in[0;1],\,u_2=0,1}.
\end{aligned}}

Подгруппа $Z_{A_0}\subs\SO_3$ порождена вращениями вокруг прямой $\R e_3$ на всевозможные углы, а~также оператором $\diag(1;-1;-1)$. Значит, отношение эквивалентности~$\wt{\tau_0}$ на множестве $\wh{M}$ есть не что иное как
$\bc{(u_1;u_2;j)\bw\sim(u_1;u'_2;j')\cln u_1,u_2,u'_2\bw\in[0;1],\,j,j'\bw=\pm1}$,
а~фактор $V/G$ гомеоморфен факторпространству декартова произведения $\R_{\ge0}^2\bw\times[0;1]^3\bw\times\{\pm1\}$ по отношению эквивалентности
\eqn{\label{equz}
\begin{aligned}
(0;r;t;u_1;u_2;j)&\sim(0;r;t';u'_1;u'_2;j')&&&&\br{r\in\R_{\ge0},\,t,t',u_i,u'_i\in[0;1],\,j,j'=\pm1};\\
(s;0;t;u_1;u_2;j)&\sim(s;0;t;u'_1;u'_2;j') &&&&\br{s\in\R_{\ge0},\,t,u_i,u'_i\in[0;1],\,j,j'=\pm1};\\
(s;r;t;u_1;u_2;j)&\sim(s;r;t;u_1;u'_2;j')  &&&&\br{s,r\in\R_{\ge0},\,t=0,1,\,u_1,u_2,u'_2\in[0;1],\,j,j'=\pm1};\\
(s;r;t;0;u_2;j)  &\sim(s;r;t;0;u'_2;j')    &&&&\br{s,r\in\R_{\ge0},\,t,u_2,u'_2\in[0;1],\,j,j'=\pm1};\\
(s;r;t;1;u_2;1)  &\sim(s;r;t;1;u_2;-1)     &&&&\br{s,r\in\R_{\ge0},\,t,u_2\in[0;1]};\\
(s;r;t;u_1;u_2;1)&\sim(s;r;t;u_1;u_2;-1)   &&&&\br{s,r\in\R_{\ge0},\,t,u_1\in[0;1],\,u_2=0,1}.
\end{aligned}}

Образ отображения
\equ{
\pi\cln\R_{\ge0}^2\bw\times[0;1]^3\bw\times\{\pm1\}\to\R^5,\,(s;r;t;u_1;u_2;j)\to\br{s;\,r;\,st;\,sru_1;\,s^3rt(1-t)u_1u_2}}
совпадает с подмножеством
\equ{
K:=\bc{(s;r;t;u_1;u_2)\in\R^5\cln s,r,t,u_1,u_2\ge0,\,t\le s,\,u_1\le sr,\,u_2\le t(s-t)u_1}\subs\R^5.}
Функция $\al\cln\R^5\to\R,\,(s;r;t;u_1;u_2)\to(sr-u_1)u_2\br{t(s-t)u_1-u_2}$ равна нулю на $\pd K$ и~положительна в~$\Int K$.
Поэтому образ отображения
\equ{
\wt{\pi}\cln\R_{\ge0}^2\bw\times[0;1]^3\bw\times\{\pm1\}\to\R^6=\R^5\oplus\R,\,p=(s;r;t;u_1;u_2;j)\to\Br{\pi(p);j\al\br{\pi(p)}}}
совпадает с подмножеством
\equ{
\bc{(p;y)\in K\times\R\cln|y|=\al(p)}\subs K\times\R\subs\R^5\oplus\R=\R^6,}
гомеоморфным $\br{K\times\{\pm1\}}\hb{\bc{(x;1)\sim(x;-1)\cln x\in\pd K}}$. Его слои суть классы отношения эквивалентности~\eqref{equz}:
\equ{
\wt{\pi}(s;r;t;u_1;u_2;j)=\br{s;\,r;\,st;\,sru_1;\,s^3rt(1-t)u_1u_2;js^7r^3t^2(1-t)^2u_1^2(1-u_1)u_2(1-u_2)}.}

Существует гомеоморфизм из пятимерного замкнутого полупространства в~подмножество $K\subs\R^3$, переводящий граничную гиперплоскость в~$\pd K$.

\newpage

\end{document}